\newtheorem{theorem}{Theorem}[section]
\newtheorem{proposition}[theorem]{Proposition}
\newtheorem{corollary}[theorem]{Corollary}
\newtheorem{lemma}[theorem]{Lemma}
\newtheorem{conjecture}[theorem]{Conjecture}
\theoremstyle{remark}
\newcommand{\FF}{\mathbb{F}}
\DeclareMathOperator{\Gal}{Gal}
\begin{document}

\title[Low-degree planar polynomials]{Low-degree planar polynomials over finite fields of characteristic two}

\author{Daniele Bartoli}
\address{Department of Mathematics and Computer Science, University of Perugia, Perugia, 06123, Italy.}

\email{daniele.bartoli@unipg.it}

\author{Kai-Uwe Schmidt}
\address{Department of Mathematics, Paderborn University, Warburger Str.\ 100, 33098 Paderborn, Germany.}

\email{kus@math.upb.de}

\date{17 September 2018}

\begin{abstract}
Planar functions are mappings from a finite field $\FF_q$ to itself with an extremal differential property. Such functions give rise to finite projective planes and other combinatorial objects. There is a subtle difference between the definitions of these functions depending on the parity of $q$ and we consider the case that $q$ is even. We classify polynomials of degree at most $q^{1/4}$ that induce planar functions on $\FF_q$, by showing that such polynomials are precisely those in which the degree of every monomial is a power of two. As a corollary we obtain a complete classification of exceptional planar polynomials, namely polynomials over $\FF_q$ that induce planar functions on infinitely many extensions of~$\FF_q$. The proof strategy is to study the number of $\FF_q$-rational points of an algebraic curve attached to a putative planar function.~Our methods also give a simple proof of a new partial result for the classification of almost perfect nonlinear~functions.
\end{abstract}

\maketitle

\enlargethispage{2.1ex}

\thispagestyle{empty}


\section{Introduction and Results}

Let $q$ be a prime power. If $q$ is odd, a function $f:\FF_q\to\FF_q$ is \emph{planar} or \emph{perfect nonlinear} if, for each nonzero $\epsilon\in\FF_q$, the function
\begin{equation}
x\mapsto f(x+\epsilon)-f(x)   \label{eqn:def_planar_odd}
\end{equation}
is a permutation on $\FF_q$. Such planar functions can be used to construct finite projective planes~\cite{DemOst1968}, relative difference sets~\cite{GanSpe1975}, error-correcting codes~\cite{CarDinYua2005}, and S-boxes in block ciphers~\cite{NybKnu1993}.
\par
If $q$ is even, a function $f:\FF_q\to\FF_q$ cannot satisfy the above definition of planar functions because $x=a$ and $x=a+\epsilon$ are mapped by~\eqref{eqn:def_planar_odd} to the same image. This is the motivation to define a function $f:\FF_q\to\FF_q$ for even $q$ to be \emph{almost perfect nonlinear} (APN) if~\eqref{eqn:def_planar_odd} is a $2$-to-$1$ map. Such functions are highly relevant again for the construction of S-boxes in block ciphers~\cite{NybKnu1993}. However, there is no apparent link between APN functions and projective planes. More recently, Zhou~\cite{Zho2013} defined a natural analogue of planar functions on finite fields of characteristic two: If $q$ is even, a function $f:\FF_q\to\FF_q$ is \emph{planar} if, for each nonzero $\epsilon\in\FF_q$, the function
\[
x\mapsto f(x+\epsilon)+f(x)+\epsilon x
\]
is a permutation on $\FF_q$. As shown by Zhou~\cite{Zho2013} and Schmidt and Zhou~\cite{SchZho2014}, such planar functions have similar properties and applications as their counterparts in odd characteristic.
\par
We refer to~\cite{Pot2016} for an excellent survey of recent results for the functions defined above.
\par
The main result of this paper is a classification of the latter type of planar functions, namely those defined in characteristic two. Recall that every function from $\FF_q$ to itself is induced by a polynomial in $\FF_q[X]$ of degree at most $q-1$. A polynomial $f\in\FF_q[X]$ is called a \emph{$2$-polynomial} if the degree of every monomial in $f$ is a power of two. For even $q$, such polynomials trivially induce planar functions on $\FF_{q^r}$ for all $r\ge 1$. We show that among all polynomials of sufficiently small degree there are no other planar functions in characteristic two.
\par
From now on $q$ will always be a power of two.
\begin{theorem}
\label{thm:planar}
Let $f\in\FF_q[X]$ be a polynomial of degree at most $q^{1/4}$. If $f$ is planar on $\FF_q$, then $f$ is a $2$-polynomial. 
\end{theorem}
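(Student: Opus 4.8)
The plan is to attach an algebraic curve to a putative planar function and argue that, if $f$ is planar but not a $2$-polynomial, this curve has too many $\FF_q$-rational points to be consistent with the planarity condition.
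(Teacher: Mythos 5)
There is a genuine gap here: your proposal names the paper's strategy but contains none of the mathematics that makes the strategy work. The Hasse--Weil bound produces a guaranteed supply of $\FF_q$-rational points only for a curve possessing a component that is absolutely irreducible \emph{and} defined over $\FF_q$; an arbitrary plane curve over $\FF_q$ (for instance, one splitting into conjugate components over an extension field) can have essentially no $\FF_q$-points at all. So after attaching to $f$ the curve $F(X,Y)=\widetilde{\psi}(X,1,X+1,Y)$ of \eqref{eqn:def_F} --- whose $\FF_q$-points, by planarity of $f$, all lie on the lines $X=1$ and $Y=0$ --- the entire burden of the proof is to show that when $f$ is \emph{not} a $2$-polynomial, $F$ has an absolutely irreducible factor over $\FF_q$; Proposition~\ref{pro:Hasse_Weil} then yields the contradiction. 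Your sketch never engages with this, and it cannot be waved away: when $f$ \emph{is} a $2$-polynomial the analogous curve has very few $\FF_q$-points, so any successful argument must locate exactly where the hypothesis ``not a $2$-polynomial'' enters, and that is precisely inside the irreducibility argument.

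In the paper this occupies nearly all of Section~3. One applies blow-up-type transformations $g(X,Y)\mapsto g(XY,Y)/Y^{n}$ and variable substitutions repeatedly (Lemma~\ref{lem:geometric_transform} guarantees that these preserve the existence of an absolutely irreducible factor over $\FF_q$), and one tracks tangent cones using Lucas's theorem (Lemma~\ref{lem:Lucas}) until a tangent cone containing a reduced linear factor over $\FF_q$ appears; Lemma~\ref{lem:tangent} then upgrades this to absolute irreducibility, and Proposition~\ref{pro:main_tool} packages the whole mechanism. Carrying this out requires the detailed combinatorial analysis of Lemmas~\ref{lem:u_not_one}--\ref{lem:even_degrees}: divisibility properties of the exponents $n_r$, the identification of the tangent cone of $F_t$ as $Y^{2^u-2}$, and the parity analysis of degrees of monomials in $F_{t+2}$, culminating in the final sequence of substitutions $H_0,H_1,\dots,H_{(m-1)/2}$. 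None of this is routine --- the authors emphasize that producing the absolutely irreducible factor is the difficult part --- and without it, or some substitute argument for absolute irreducibility, the point-counting plan you describe does not get off the ground.
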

\par
Now consider polynomials $f\in\FF_q[X]$ with the property that $f$ is planar on $\FF_{q^r}$ for infinitely many positive integers $r$. As in~\cite{CauSchZho2016}, we call such a polynomial an \emph{exceptional} planar polynomial. As a corollary, we obtain a complete classification of such polynomials.
\begin{corollary}
\label{cor:planar}
If $f\in\FF_q[X]$ is an exceptional planar polynomial, then~$f$ is a $2$-polynomial.
\end{corollary}
\par
Theorem~\ref{thm:planar} considerably strengthens the main result of~\cite{MulZie2015}, which is the specialisation of Theorem~\ref{thm:planar} to the case that $f$ is a monomial. It should be noted that there are examples of planar functions on $\FF_q$ for even $q$ that are not induced by $2$-polynomials, see~\cite{Zho2013,SchZho2014,SchZie2014,HuLiZhaFenGe2015,Qu2016}. Of course all of these examples have degree larger than $q^{1/4}$.
\par
Our methods also give a simple proof of a partial classification result for APN functions. As in~\cite{AubMcGRod2010}, we call a polynomial $f\in\FF_q[X]$ an \emph{exceptional} APN polynomial if $f$ induces an APN function on $\FF_{q^r}$ for infinitely many positive integers $r$. It is well known (see~\cite{Pot2016}, for example) that, for each positive integer~$k$, the monomials $X^{2^k+1}$ and $X^{4^k-2^k+1}$ are exceptional APN polynomials, also called \emph{Gold} and \emph{Kasami-Welch} monomials, respectively. In fact these monomials induce APN functions on $\FF_{2^r}$ for all positive integers~$r$ that are coprime to~$k$.
\par
The following conjecture was proposed by Aubry, McGuire, and Rodier~\cite{AubMcGRod2010}.
\begin{conjecture}[\!\!\cite{AubMcGRod2010}]
\label{con:APN}
If $f\in\FF_q[X]$ is an exceptional APN polynomial, then~$f$ is equivalent to a Gold or a Kasami-Welch monomial.
\end{conjecture}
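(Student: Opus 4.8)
The plan is to carry the point-counting strategy behind Theorem~\ref{thm:planar} over to the differential setting and attack Conjecture~\ref{con:APN} through the geometry of an associated variety. Following the by-now standard reduction of Rodier and of Aubry, McGuire, and Rodier~\cite{AubMcGRod2010}, to a polynomial $f\in\FF_q[X]$ one attaches the affine surface $\Sc_f\subset\AA^3$ defined by
\[
\phi_f(x_0,x_1,x_2):=\frac{f(x_0)+f(x_1)+f(x_2)+f(x_0+x_1+x_2)}{(x_0+x_1)(x_0+x_2)(x_1+x_2)}=0,
\]
where the three linear forms divide the numerator because, in characteristic two, it vanishes identically on each plane $x_i=x_j$. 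Writing $a=x_0+x_1$ and unwinding the definition, a point of $\Sc_f$ with pairwise distinct coordinates records a coincidence $f(x_0)+f(x_0+a)=f(x_2)+f(x_2+a)$ with $x_2\notin\{x_0,x_0+a\}$, that is, a failure of the APN property; hence $f$ is APN on $\FF_{q^r}$ exactly when every $\FF_{q^r}$-rational point of $\Sc_f$ lies on the diagonal locus $\bigcup_{i<j}\{x_i=x_j\}$. As in the planar case, the arithmetic condition has been recast as the absence of ``extra'' rational points on an explicit variety.

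First I would make the link between exceptionality and absolute irreducibility precise. If $\Sc_f$ has an absolutely irreducible component defined over $\FF_q$, then by the Lang--Weil estimate this component carries $q^{2r}+O(q^{3r/2})$ points over $\FF_{q^r}$, while the trivial solutions on the diagonal locus lie on a lower-dimensional subset and number only $O(q^r)$; for all large $r$ there is then a point witnessing a failure of the APN property, so $f$ is not exceptional. Taking the contrapositive, an exceptional APN polynomial $f$ must be such that $\Sc_f$ has no absolutely irreducible component over $\FF_q$, and the problem becomes the converse-flavoured assertion that, unless $f$ is of Gold or Kasami--Welch type, such a component does exist. Following the curve method of this paper I would cut $\Sc_f$ by a suitable plane to obtain a plane curve $\Cc_f$ of degree comparable to $\deg f$ and prove that $\Cc_f$ has an absolutely irreducible $\FF_q$-rational component; the Weil bound for curves then supplies, for large $r$, enough $\FF_{q^r}$-points off the diagonals to contradict the APN property. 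This is effective once $\deg f$ is small enough that $\Cc_f$ can be controlled directly.

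The heart of the matter, and the step I expect to be the genuine obstacle, is the geometric classification: one must show that $\Sc_f$ fails to have an absolutely irreducible $\FF_q$-rational component only for the Gold and Kasami--Welch exponents. This reduces to analysing the points at infinity, that is, the factorisation of the top-degree form of $\phi_f$ over $\overline{\FF}_q$, and to controlling how the Frobenius permutes the resulting components. For the Gold and Kasami--Welch exponents this leading form degenerates in such a way that its components are Galois-conjugate while no single one is rational over $\FF_q$, which is precisely the mechanism keeping these monomials exceptional; for every other exponent pattern one expects $\Sc_f$ to be absolutely irreducible over $\FF_q$, but establishing this uniformly in $\deg f$ is exactly what remains open. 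The methods developed here settle the question under a degree restriction---small enough that the leading form is accessible and the finitely many degenerate configurations can be classified by hand---which yields the partial APN result announced in the abstract; removing the degree hypothesis entirely would require an absolute-irreducibility criterion for $\phi_f$ valid for arbitrary exponents, and no such criterion is presently available.
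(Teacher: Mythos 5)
You have not proved Conjecture~\ref{con:APN}, and neither does the paper: the statement you were given is an \emph{open conjecture}, which the paper attributes to~\cite{AubMcGRod2010}, records as proved only in the monomial case by Hernando and McGuire~\cite{HerMcG2011}, and otherwise leaves open (see the survey~\cite{Del2017}). Your proposal correctly reconstructs the standard framework --- Rodier's surface $\Sc_f$ defined by $\psi(X,Y,Z)=0$, the dichotomy ``absolutely irreducible $\FF_q$-component off the diagonals $\Rightarrow$ many rational points $\Rightarrow$ not APN'' via Lang--Weil or Hasse--Weil, and the plane-section trick producing a curve --- and this is indeed the machinery the paper uses in Section~\ref{sec:APN}. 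But the entire content of the conjecture is the classification step you defer: showing that $\Sc_f$ fails to have an absolutely irreducible component over $\FF_q$ \emph{only} when $f$ is (equivalent to) a Gold or Kasami--Welch monomial. At that point your argument says ``one expects'' and, candidly, that ``establishing this uniformly in $\deg f$ is exactly what remains open.'' Reducing the conjecture to its own key assertion is not a proof; it is a restatement of the problem in geometric language, which was already the contribution of~\cite{AubMcGRod2010} and of Janwa and Wilson~\cite{JanWil1993}.

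Beyond the missing central step, two specific points in your sketch would need real work even as heuristics. First, the mechanism you describe for exceptionality (components permuted by Frobenius, none rational over the base field) is only transparent for Gold exponents, where the numerator of $\psi$ splits into linear forms conjugate over $\FF_{2^k}$; for Kasami--Welch exponents the factorisation of $\psi$ is genuinely difficult, its factors are not hyperplanes, and controlling them is a substantial part of~\cite{HerMcG2011}. Second, the conjecture is stated up to CCZ-equivalence, and the surface $\Sc_f$ is not a CCZ-invariant, so even formulating ``only Gold and Kasami--Welch'' as a statement about $\Sc_f$ requires care. What the paper actually extracts from this machinery is the much more modest Proposition~\ref{pro:APN}: if $f$ has even degree $d\le q^{1/4}$ and is APN on $\FF_q$, then $4\mid d$. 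Its proof is short and concrete: for $d\equiv 2\pmod 4$ the curve $F(X,Y)=\widetilde{\psi}(X,1,X+1,Y)$ has tangent cone $A_dX+A_{d-1}Y$ at the origin by Lemma~\ref{lem:Lucas}, so Lemma~\ref{lem:tangent} yields an absolutely irreducible factor over $\FF_q$, and the Hasse--Weil count produces a rational point off the lines $X=0$, $Y=0$, $X=1$, contradicting the APN property. If you want a tractable project in this direction, proving such tangent-cone criteria for further congruence classes of $d$ is where the method has traction; the full conjecture is not within reach of this argument as outlined.
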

\par
In this conjecture, equivalence refers to CCZ-equivalence, whose precise definition is not required here (see~\cite{BudCarPot2006} for details). Conjecture~\ref{con:APN} has been proved by Hernando and McGuire~\cite{HerMcG2011} in the case that $f$ is a monomial and many other special cases have been proved in several papers. We refer to~\cite{Del2017} for a nice survey of the extensive recent literature on Conjecture~\ref{con:APN}. 
\par
We give a simple proof of the following new result.
\begin{proposition}
\label{pro:APN}
Let $f\in\FF_q[X]$ be a polynomial of even degree $d$ at most~$q^{1/4}$. If $f$ is APN on $\FF_q$, then $4\mid d$.
\end{proposition}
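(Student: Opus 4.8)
The plan is to detect a failure of the APN property as an $\FF_q$-rational point of an algebraic surface, and then to force such a point to exist by a point count. Recall that $f$ is APN on $\FF_q$ exactly when, for every nonzero $a$, the map $x\mapsto f(x+a)+f(x)$ is two-to-one; equivalently, writing
\[
\phi(x,y,z)=f(x)+f(y)+f(z)+f(x+y+z),
\]
the function $f$ is APN on $\FF_q$ if and only if every $\FF_q$-point of $\{\phi=0\}$ lies on one of the three planes $x=y$, $x=z$, $y=z$. Since $\phi$ vanishes on each of these planes, I would divide them out and study the surface $S\colon\psi=0$ in $\AA^3$, where
\[
\psi(x,y,z)=\frac{f(x)+f(y)+f(z)+f(x+y+z)}{(x+y)(x+z)(y+z)}
\]
has degree $d-3$. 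A nontrivial $\FF_q$-point of $S$ (one off the three planes) is precisely a witness that $f$ is not APN, so it suffices to produce one.

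The behaviour of $S$ at infinity is controlled by the top-degree form of $\psi$, namely
\[
h_d=\frac{x^d+y^d+z^d+(x+y+z)^d}{(x+y)(x+z)(y+z)},
\]
a homogeneous polynomial of degree $d-3$ cutting out the intersection of the projective closure of $S$ with the plane at infinity. The crux is the multiplicity with which the linear form $x+y$ divides $h_d$. Writing $d=2^v m$ with $m$ odd and using that $a\mapsto a^{2^v}$ is additive in characteristic two, one has $x^d+y^d+z^d+(x+y+z)^d=\bigl(x^m+y^m+z^m+(x+y+z)^m\bigr)^{2^v}$. A one-variable expansion shows that $x+y$ divides $x^m+y^m+z^m+(x+y+z)^m$ exactly once, since the lowest coefficient produced is $\binom{m}{1}=m$, which is odd. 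Hence $x+y$ divides the numerator of $h_d$ with multiplicity exactly $2^v$, and therefore divides $h_d$ itself with multiplicity $2^v-1$. When $d\equiv 2\pmod 4$ we have $v=1$, so $x+y$ is a multiplicity-one factor of $h_d$.

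Thus, for $d\equiv 2\pmod 4$, the curve at infinity carries the line $x+y=0$ as a reduced, absolutely irreducible, $\FF_q$-rational component. I would then invoke the standard lifting principle from this circle of ideas (as in the work of Aubry, McGuire, and Rodier and of Hernando and McGuire): a reduced, absolutely irreducible, $\FF_q$-rational component of the trace of a surface on the plane at infinity forces the surface to possess an absolutely irreducible component $V$ defined over $\FF_q$ and not contained in that plane. This $V$ is none of the planes $x=y$, $x=z$, $y=z$, because $\phi$ vanishes to order exactly one along each of them (here one uses that $f$ has degree $d\not\equiv$ a power of two, so $\phi\not\equiv 0$), and hence none of them is a component of $S$. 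A generic $\FF_q$-rational plane section of $V$ is then an absolutely irreducible curve over $\FF_q$ of degree at most $d-3$, and the hypothesis $d\le q^{1/4}$ is exactly what makes the Weil bound (equivalently, Lang--Weil) produce more $\FF_q$-points on this curve than can lie on the trivial locus $(x+y)(x+z)(y+z)=0$. Any surviving point is a nontrivial $\FF_q$-point of $S$, contradicting the assumption that $f$ is APN; therefore $d\not\equiv 2\pmod 4$, and as $d$ is even we conclude $4\mid d$.

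The conceptual reason this argument is simpler than the full classification is that I never need the irreducible factorisation of $h_d$ in general: it is enough that a single linear factor survives to multiplicity one in $h_d$, and a line is automatically absolutely irreducible and defined over $\FF_q$. The only genuinely delicate inputs are the lifting principle passing from a good component at infinity to an absolutely irreducible component of the affine surface, and the bookkeeping in the point count that the threshold $d\le q^{1/4}$ is calibrated to make succeed; I expect the lifting step to be the main obstacle to write down carefully. Finally, the very small cases must be treated directly — here only $d=2$, where $x\mapsto f(x+a)+f(x)$ is constant and $f$ is plainly not APN — since then $d-3<1$ and the surface argument is vacuous.
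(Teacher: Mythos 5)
Your route is genuinely different from the paper's, and it is the ``classical'' Aubry--McGuire--Rodier one: the paper cuts the surface with the plane $Z=X+1$, uses Lucas's theorem to show that the tangent cone at the affine origin of the resulting plane curve $F(X,Y)=0$ is the reduced linear form $A_dX+A_{d-1}Y$ when $d\equiv 2\pmod 4$, and then concludes with its Galois-descent lemma for tangent cones (Lemma~\ref{lem:tangent}) and the Hasse--Weil count of Proposition~\ref{pro:Hasse_Weil}; you instead stay with the surface $\psi=0$ and argue at infinity. Your computations are correct: $\phi_d=\phi_m^{2^v}$ in characteristic two, $x+y$ exactly divides $\phi_m=x^m+y^m+z^m+(x+y+z)^m$ for odd $m\ge 3$, hence $x+y$ divides $h_d$ with multiplicity $2^v-1$, which is $1$ when $d\equiv 2\pmod 4$; and the lifting lemma you invoke is a true, standard statement, proved by the same Galois argument as Lemma~\ref{lem:tangent} applied to leading forms instead of tangent cones. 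Your separate treatment of $d=2$ is also correct (and is glossed over by the paper). Two secondary caveats: your final count would need an effective Bertini statement to pass from the surface to an absolutely irreducible plane curve, since Lang--Weil-type constants for surfaces are too crude at the threshold $d\le q^{1/4}$; this is repairable but not free.

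The fatal step is the claim that $V$ is none of the three planes ``because $\phi$ vanishes to order exactly one along each of them''. That does not follow from $\phi\not\equiv 0$. Since the $\phi_i=x^i+y^i+z^i+(x+y+z)^i$ are homogeneous of distinct degrees, the order of vanishing of $\phi=\sum_iA_i\phi_i$ along $x=y$ equals $\min\{2^{\nu(i)}:A_i\ne 0,\ i\ \text{not a power of two}\}$, and this is $1$ precisely when $f$ has a term of odd degree at least $3$. If every term of $f$ has even degree, then $\phi$ is a perfect square and the planes genuinely are components of $S$: for $f=X^6$ one has $\phi=\bigl((x+y)(x+z)(y+z)\bigr)^2$ and $\psi=(x+y)(x+z)(y+z)$, so the surface is exactly the union of the three planes, your lifting lemma returns the plane $x=y$ itself (its leading form is the unique factor of $h_d$ divisible by $x+y$), and no nontrivial point exists. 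Moreover, this gap cannot be closed, because the statement is false as written: $X^6=(X^3)^2$ is APN on every $\FF_{2^n}$ (it is the Gold function $X^3$ composed with the Frobenius bijection $x\mapsto x^2$), it has even degree $6\le q^{1/4}$ once $q\ge 2^{11}$, and $4\nmid 6$. You are in good company: the paper's own proof fails at the corresponding step, since for $f=X^6$ its curve is $F(X,Y)=X(X+1)$, so $F(0,Y)$ and $F(1,Y)$ vanish identically and the claimed $\FF_q$-points off the lines $X=0$, $Y=0$, $X=1$ do not exist. Both your argument and the paper's become valid under the additional hypothesis that $f$ has a term of odd degree at least $3$ (then $x+y\nmid\psi$, respectively $X$, $X+1$, $Y$ do not divide $F$) --- which is exactly the hypothesis of Theorem~2.4 of \cite{AubMcGRod2010} that Proposition~\ref{pro:APN} purports to remove; in the excluded case $f$ is a square of a polynomial of odd degree plus a $2$-polynomial, and no surface-theoretic argument can rule it out.
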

\par
Proposition~\ref{pro:APN} solves one of the five pending cases listed in~\cite[Section~4]{Del2017} and strengthens~\cite[Theorem 2.4]{AubMcGRod2010} essentially by removing the additional assumption that~$f$ has a term of odd degree.
\par
In our proof of Theorem~\ref{thm:planar} we study an algebraic surface depending on a polynomial $f\in\FF_q[X]$, such that if $f\in\FF_q[X]$ induces a planar function on~$\FF_q$, then the surface has only very few $\FF_q$-rational points. This surface is then intersected with a plane and we consider the resulting algebraic curve. The difficult part is to show that this curve has a component defined by an absolutely irreducible polynomial with coefficients in $\FF_q$. The Hasse-Weil bound then asserts that the curve, and so also the surface, has many $\FF_q$-rational points, provided that the degree of $f$ is not too large. This leads to a contradiction unless $f$ is a $2$-polynomial.
\par
This approach seems to be first used by Janwa and Wilson~\cite{JanWil1993} for monomial APN functions and by Aubry, McGuire, and Rodier~\cite{AubMcGRod2010} for general APN functions. Besides the classification problem for APN functions, classification problems for other combinatorial objects have been attacked with this method, for example for planar functions in odd characteristic~\cite{Led2015,Zie2015,CauSchZho2016}, hyperovals~ \cite{HerMcG2012,Zie2015,CauSch2015}, and maximum scattered linear sets~\cite{BarZho2018}. However a complete classification, as in Corollary~\ref{cor:planar}, has been obtained so far only in one other case, namely in the classification problem for polynomials that induce hyperovals in finite Desarguesian planes~\cite{CauSch2015}. We also remark that our methods for proving absolute irreducibility differ considerably from previous techniques.


\section{Proof strategy}

In this section we present the principal approach for proving Theorem~\ref{thm:planar}. In Section~\ref{sec:APN} we then describe the required modifications of this approach to obtain a simple proof of Proposition~\ref{pro:APN}.
\par
Let $q$ be a power of two and let $f\in\FF_q[X]$ be a nonzero polynomial in which the degree of every monomial is not a power of two. Note that there is no loss of generality here since the addition of a $2$-polynomial preserves the planarity of the function induced by $f$. Define the polynomial
\[
\phi(X,Y,W)=\frac{f(X+W)+f(X)+WX+f(Y+W)+f(Y)+WY}{(X+Y)W}.
\]
It is a direct consequence of the definition of planar functions that $f$ induces a planar function on $\FF_q$ if and only if all $\FF_q$-rational points on the affine surface defined by $\phi(X,Y,W)=0$ satisfy $X=Y$ or $W=0$.
\par
Put $\psi(X,Y,Z)=\phi(X,Y,X+Z)$, so that
\[
\psi(X,Y,Z)=1+\frac{f(X)+f(Y)+f(Z)+f(X+Y+Z)}{(X+Y)(X+Z)}.
\]
Then $f$ induces a planar function on $\FF_q$ if and only if all $\FF_q$-rational points of the affine surface defined by $\psi(X,Y,Z)=0$ satisfy $X=Y$ or $X=Z$. Now write
\[
f=\sum_{i=0}^dA_iX^i,
\]
where $A_d\ne 0$. Since $d$ is not a power of two, the homogenised form of $\psi$ is
\[
\widetilde{\psi}(X,Y,Z,T)=T^{d-2}+\sum_{i=3}^dA_i\frac{X^i+Y^i+Z^i+(X+Y+Z)^i}{(X+Y)(X+Z)}T^{d-i}.
\]
We study the intersection of the projective surface defined by $\widetilde{\psi}(X,Y,Z,T)=0$ with the plane defined by $Z=X+1$. In fact, we consider the affine curve defined by $F(X,Y)=0$, where $F(X,Y)=\widetilde{\psi}(X,1,X+1,Y)$. We have
\[
F(X,Y)=Y^{d-2}+\sum_{i=3}^dA_i\frac{X^i+1+(X+1)^i}{X+1}Y^{d-i}
\]
and, after expanding,
\begin{equation}
F(X,Y)=Y^{d-2}+\sum_{i=3}^dA_iY^{d-i}\sum_{k=0}^{i-1}\Bigg[\binom{i-1}{k}+1\Bigg]X^k.   \label{eqn:def_F}
\end{equation}
If $f$ induces a planar function on $\FF_q$, then all $\FF_q$-rational points of the affine curve defined by $F(X,Y)=0$ satisfy $X=1$ or $Y=0$.
\par
The following result is a consequence of the Hasse-Weil bound for the number of $\FF_q$-rational points on curves.
\begin{proposition}
\label{pro:Hasse_Weil}
Let $f\in\FF_q[X]$ be a polynomial of degree at most $q^{1/4}$ in which the degree of every monomial is not a power of two. If $F$ has an absolutely irreducible factor over $\FF_q$, then $f$ does not induce a planar function on $\FF_q$.
\end{proposition}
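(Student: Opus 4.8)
The plan is to derive a contradiction from the assumption that $F$ has an absolutely irreducible factor over $\FF_q$ by counting $\FF_q$-rational points on the affine curve defined by $F(X,Y)=0$ via the Hasse-Weil bound. The key arithmetic input is that, by the discussion preceding the proposition, if $f$ induces a planar function on $\FF_q$, then every $\FF_q$-rational point of this curve must lie on the two lines $X=1$ or $Y=0$; hence the number of \emph{affine} $\FF_q$-rational points of the curve is at most the number of points on these two lines, which is $O(q)$. So the strategy is to show that an absolutely irreducible component, together with the Hasse-Weil bound, forces \emph{more} than $O(q)$ points, provided $\deg f\le q^{1/4}$.

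Concretely, I would first bound the degree of $F$. From \eqref{eqn:def_F}, the total degree of $F$ is at most $d-1$, where $d=\deg f\le q^{1/4}$, so $\deg F\le q^{1/4}$. Let $G$ be an absolutely irreducible factor of $F$ defined over $\FF_q$, and let $e=\deg G\le \deg F\le q^{1/4}$. By the Hasse-Weil bound, the smooth projective model of the curve $G=0$ has at least $q+1-(e-1)(e-2)\sqrt{q}$ rational points. Translating this into a lower bound on the number of affine $\FF_q$-rational points on $G=0$, and hence on $F=0$, I would subtract the contribution of points at infinity and of singular points, each of which is $O(e^2)$ and thus $O(\sqrt q)$. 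This gives a lower bound of the shape $q+1-(e-1)(e-2)\sqrt q - O(e^2)$ for the number of affine $\FF_q$-rational points on $F=0$.

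The contradiction then comes from comparing the two counts. The number of affine points forced onto the two lines $X=1$ and $Y=0$ is at most $2q$ (in fact roughly $2q$, after accounting for the finitely many points lying on both lines or on neither component). Since $e\le q^{1/4}$, we have $(e-1)(e-2)\sqrt q \le e^2\sqrt q \le q^{1/2}\cdot q^{1/2}=q$, so the Hasse-Weil lower bound is of order $q$ up to lower-order terms, while the restriction to the two lines also gives an $O(q)$ upper bound. The \textbf{main obstacle} is therefore to check that the constants work out, i.e.\ that for $d\le q^{1/4}$ the Hasse-Weil lower bound strictly exceeds the number of points on the two lines. This requires being careful that $e^2\sqrt q$ is genuinely smaller than $q$ (using $e\le q^{1/4}$ one gets $e^2\le \sqrt q$, so $(e-1)(e-2)\sqrt q$ is at most about $\sqrt q\cdot\sqrt q=q$, which is borderline) and that the error terms from points at infinity and singular points are absorbed; one likely needs the bound $\deg F\le d-1$ rather than $d$ to gain enough room, and possibly a slightly more careful count of how many of the Hasse-Weil points can actually fall on $X=1$ or $Y=0$.

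Once the strict inequality is established, the curve $F=0$ has an $\FF_q$-rational point with $X\ne 1$ and $Y\ne 0$, contradicting the planarity of $f$; hence $f$ does not induce a planar function on $\FF_q$, which is exactly the assertion of the proposition.
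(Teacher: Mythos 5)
Your proposal contains a genuine gap, and it is precisely at the point you flag as ``the main obstacle'': the comparison you set up can never produce a contradiction. You bound the number of $\FF_q$-rational points of the curve forced onto the lines $X=1$ and $Y=0$ by the \emph{total} number of points on those lines, roughly $2q$, and then hope that the Hasse--Weil lower bound for an absolutely irreducible component exceeds this. But the Hasse--Weil lower bound for a curve over $\FF_q$ is of the form $q+1-(e-1)(e-2)\sqrt{q}-O(e^2)$, which is at most about $q+1$ no matter how small $e$ is; it can never exceed $2q$. So no amount of care with constants, nor the improvement $\deg F\le d-1$ (in fact $\deg F=d-2$), can close this gap: the approach as stated fails structurally, not numerically.

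The missing idea, which is the crux of the paper's proof, is that one should not count points \emph{on the lines} but points of the curve \emph{lying on the lines}, and these are very few. Since $F(1,Y)$ and $F(X,0)$ are nonzero univariate polynomials of degree at most $d-2$ (for instance, $F(1,Y)$ has leading term $Y^{d-2}$; in particular neither $X-1$ nor $Y$ divides $F$), the curve $F=0$ meets the union of the two lines in at most $2(d-2)$ points --- a quantity of order $q^{1/4}$, not $q$. With this substitution-based count, the paper obtains that the number of $\FF_q$-rational points of the curve off the two lines is at least
\[
q-(d-3)(d-4)q^{1/2}-3d+7,
\]
and since $d\le q^{1/4}$ and $q\ge 2^7$ (which follows from $d\ge 3$, as $d$ is not a power of two), this quantity is positive: writing $(d-3)(d-4)q^{1/2}\le (q^{1/2}-7d+12)q^{1/2}$ shows the main terms cancel and a positive term of order $dq^{1/2}$ survives. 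Any point off the lines then contradicts planarity, by the discussion preceding the proposition. Your closing remark that one ``possibly needs a slightly more careful count of how many of the Hasse--Weil points can actually fall on $X=1$ or $Y=0$'' gestures at this, but the observation that this count is $O(d)$ rather than $O(q)$ is not an optional refinement --- it is the step that makes the proposition true, and it is absent from your argument.
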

\begin{proof}
Let $d$ be the degree of $f$. Then the degree of $F$ is $d-2$. Since $d$ is not a power of two, we have $d\ge 3$, so that $q\ge 2^7$. Suppose that $F$ has an absolutely irreducible factor over $\FF_q$. Then, by the Hasse-Weil bound (see~\cite[Theorem~5.4.1]{FriJar2008}, for example), the number of $\FF_q$-rational points on the affine curve defined by $F(X,Y)=0$ is at least
\[
q-(d-3)(d-4)q^{1/2}-d+3.
\]
Since $F(1,Y)$ and $F(X,0)$ are polynomials of degree at most $d-2$, the number of $\FF_q$-rational points that are not on the lines $X=1$ or $Y=0$ is at least
\[
q-(d-3)(d-4)q^{1/2}-3d+7,
\]
which (since $d\le q^{1/4}$ and $q\ge 2^7$) is positive. The discussion preceding the proposition then implies that $f$ is not planar.
\end{proof}
\par
The difficulty in applying Proposition~\ref{pro:Hasse_Weil} is to show that the polynomial~$F$ has an absolutely irreducible factor over $\FF_q$. Our strategy will be to apply certain transformations repeatedly to $F$ and then use the following lemma. 
\begin{lemma}
\label{lem:geometric_transform}
Let $G\in\FF_q[X,Y]$ be a nonzero polynomial and define
\[
H(X,Y)=\frac{G(X,XY)}{X^n},
\]
where $n$ is the smallest degree of a monomial in $G$. If $H$ has an absolutely irreducible factor in $\FF_q[X,Y]$, then $G$ has an absolutely irreducible factor in~$\FF_q[X,Y]$.
\end{lemma}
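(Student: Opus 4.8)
The plan is to relate the factorizations of $G$ and $H$ via the birational substitution $(X,Y)\mapsto(X,XY)$, which is invertible away from the line $X=0$. The key observation is that the map $\Phi\colon(X,Y)\mapsto(X,XY)$ is an automorphism of the affine plane minus the $Y$-axis, so it induces a bijection on the irreducible components of the curves defined by $G$ and $H$ that are not contained in $X=0$. The division by $X^n$ is precisely what is needed to strip off the factor of $X$ forced by the substitution: writing $G(X,XY)=X^n H(X,Y)$ where $n$ is the smallest monomial degree in $G$ guarantees that $H$ is a genuine polynomial with $X\nmid H$, so no spurious power of $X$ is absorbed into $H$.

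First I would set up the correspondence precisely. Given an absolutely irreducible factor $H_0\in\FF_q[X,Y]$ of $H$, I want to produce an absolutely irreducible factor of $G$. The natural candidate is obtained by reversing the substitution: consider $H_0(X,Y/X)$ and clear denominators. More carefully, over the algebraic closure $\overline{\FF_q}$, factor $G=X^n\prod_j G_j$ with each $G_j$ irreducible and $X\nmid G_j$. Applying the substitution $Y\mapsto XY$ and dividing by the appropriate power of $X$, each $G_j$ transforms into a polynomial $\widetilde{G_j}$, and one checks that $H=\prod_j \widetilde{G_j}$ up to a unit. The central claim is that each $\widetilde{G_j}$ is again irreducible over $\overline{\FF_q}$: this follows because the substitution is a birational automorphism, so it preserves irreducibility of the corresponding affine curves (the function fields are isomorphic). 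Hence the irreducible factors of $H$ over $\overline{\FF_q}$ are exactly the transforms of the irreducible factors of $G$ other than $X$.

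The main subtlety, and the step I expect to require the most care, is tracking the field of definition. Absolute irreducibility alone is not enough; I need the factor of $G$ to lie in $\FF_q[X,Y]$, not merely in $\overline{\FF_q}[X,Y]$. The point is that the substitution $Y\mapsto XY$ and its inverse are both defined over $\FF_q$ (indeed over the prime field), so the induced bijection on geometric irreducible components commutes with the action of the absolute Galois group $\Gal(\overline{\FF_q}/\FF_q)$. Therefore a Galois-stable component of $H$ corresponds to a Galois-stable component of $G$, and a component is defined over $\FF_q$ exactly when it is fixed by the Galois action. Consequently, if $H$ has an absolutely irreducible factor defined over $\FF_q$—equivalently, a geometric component that is Galois-invariant—then its preimage under the substitution is a Galois-invariant geometric component of $G$, which is therefore defined over $\FF_q$ and yields the desired absolutely irreducible factor of $G$ in $\FF_q[X,Y]$.

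To make the Galois descent rigorous I would argue as follows: an absolutely irreducible polynomial $H_0\in\FF_q[X,Y]$ generates a prime ideal whose zero set is a geometrically integral curve; its image under the inverse substitution (followed by clearing denominators) defines a geometrically integral curve over $\FF_q$, whose defining polynomial—taken to be primitive and with content normalized—is then an absolutely irreducible element of $\FF_q[X,Y]$ dividing $G$. The only thing to verify is that this defining polynomial genuinely divides $G$ and is not the excised factor $X$, which is immediate since $H_0\ne X$ (as $X\nmid H$) and the correspondence sends $X$ to itself. This completes the argument, and no hard estimates or case analysis are needed—the content is entirely the birational invariance of geometric irreducibility together with its compatibility with the Galois action.
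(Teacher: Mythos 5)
Your proof is correct in substance, but it takes a genuinely different route from the paper's. You pull the given factor $H_0$ of $H$ \emph{backwards} through the substitution: you invert the birational map $(X,Y)\mapsto(X,XY)$, transport geometric components, and invoke Galois descent to land in $\FF_q[X,Y]$. The paper argues \emph{forwards} and purely algebraically: after reducing to the case that $H$ itself is absolutely irreducible, it takes an arbitrary factorisation $G=AB$ over any extension $\FF_{q^r}$, observes that $H=\bigl(A(X,XY)/X^a\bigr)\bigl(B(X,XY)/X^b\bigr)$ with $a+b=n$, and concludes from the absolute irreducibility of $H$ that $A=\gamma X^a$ or $B=\gamma X^b$; hence either $X\mid G$, in which case $X$ is the desired factor, or $G$ is itself absolutely irreducible. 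A consequence of the paper's direction is that the factor exhibited is either $X$ or $G$, both visibly in $\FF_q[X,Y]$, so the rationality question you call the main subtlety never arises; in fact it is also vacuous in your setup, since your inverse transform $X^{m_0}H_0(X,Y/X)$ of the $\FF_q$-polynomial $H_0$ automatically has coefficients in $\FF_q$, and no Galois argument is needed. Two small imprecisions in your write-up should be repaired: first, writing $G=X^n\prod_j G_j$ conflates $n$ (the least degree of a monomial of $G$) with the exact power of $X$ dividing $G$ --- for $G=Y^2+X$ one has $n=1$ but $X\nmid G$ --- although the identity $H=\prod_j\widetilde{G_j}$ that you actually use survives because least monomial degrees are additive; second, birational invariance of the curves only shows that each $\widetilde{G_j}$ has irreducible zero locus, i.e.\ is a \emph{power} of an irreducible polynomial, and to rule out a proper power you need the easy forward multiplicativity of the transform: $\widetilde{G_j}=P^k$ would pull back to exhibit $G_j$ as a $k$-th power times a power of $X$, contradicting its irreducibility. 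Both fixes are one line. What your approach buys is conceptual clarity (the lemma holds for any $\FF_q$-rational birational automorphism restricting to an automorphism off the line $X=0$); what the paper's buys is brevity and complete avoidance of geometric machinery and descent.
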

\begin{proof}
Suppose that $H$ has an absolutely irreducible factor over $\FF_q$. We may as well suppose that $H$ itself is absolutely irreducible. Assume that we can factor $G$ as $G=AB$, where $A,B\in\FF_{q^r}[X,Y]$ for some positive integer $r$ and $A$ and $B$ have positive degree. Then we have
\begin{equation}
H(X,Y)=\frac{A(X,XY)}{X^a}\,\frac{B(X,XY)}{X^b}   \label{eqn:factorisation_of_H}
\end{equation}
for some nonnegative integers $a$ and $b$ satisfying $a+b=n$. If $A=\gamma X^a$ or $B=\gamma X^b$ for some nonzero $\gamma\in\FF_{q^r}$, then clearly $G$ has an absolutely irreducible factor in $\FF_q[X,Y]$. Otherwise, both of the factors on the right-hand side of~\eqref{eqn:factorisation_of_H} have positive degree, contradicting that $H$ is absolutely irreducible.
\end{proof}
\par
Now let $H\in\FF_q[X,Y]$ be a polynomial and let $P=(x_0,y_0)$ be a point in the plane. Write
\[
H(X+x_0,Y+y_0)=H_0(X,Y)+H_1(X,Y)+H_2(X,Y)+\cdots,
\]
where $H_i$ is either the zero polynomial or a homogeneous polynomial of degree $i$. If $H_m\ne 0$ and $H_i=0$ for all $i<m$, then the polynomial $H_m$ is called the \emph{tangent cone} of~$F$ at~$P$. Whenever we refer to the tangent cone of a polynomial without specific reference to a point, we mean the tangent cone at the origin~$(0,0)$. 
\par
The following lemma gives a simple criterion for the existence of an absolutely irreducible factor over $\FF_q$ of a polynomial in $\FF_q[X,Y]$. 
\begin{lemma}
\label{lem:tangent}
Let $H\in\FF_q[X,Y]$ and suppose that the tangent cone of $H$ contains a reduced linear factor over $\FF_q$. Then $H$ has an absolutely irreducible factor over $\FF_q$.
\end{lemma}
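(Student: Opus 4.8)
The plan is to factor $H$ over the algebraic closure $\overline{\FF_q}$ into absolutely irreducible polynomials and to isolate, using the reduced linear factor $L=\alpha X+\beta Y$ (with $\alpha,\beta\in\FF_q$) of the tangent cone, a single absolutely irreducible factor that the Galois group must fix, and hence that is defined over $\FF_q$.

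First I would record that tangent cones at the origin are multiplicative: if $H=PQ$ in $\overline{\FF_q}[X,Y]$, then the lowest-degree homogeneous parts multiply, so the tangent cone of $H$ is the product of the tangent cones of $P$ and $Q$, where a factor not vanishing at the origin simply contributes its nonzero constant term. Writing $H=c\prod_j H_j^{m_j}$ for the factorisation into pairwise non-associate absolutely irreducible $H_j$, it follows that the tangent cone of $H$ equals $c'\prod_j(\mathrm{tc}\,H_j)^{m_j}$ up to a nonzero constant $c'$, the product running over those $j$ with $H_j(0,0)=0$.

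Next, since $L$ is a \emph{reduced} factor of the tangent cone of $H$, the total multiplicity of $L$ in this product is exactly one. As each $\mathrm{tc}\,H_j$ is a binary form that splits into linear factors over $\overline{\FF_q}$, there is a unique index, say $0$, with $L\mid\mathrm{tc}\,H_0$; moreover $m_0=1$ and $L$ divides $\mathrm{tc}\,H_0$ simply. I would then invoke the Galois action: because $H\in\FF_q[X,Y]$, the group $\Gamma=\Gal(\overline{\FF_q}/\FF_q)$ acts coefficientwise and permutes the associate classes of the $H_j$. For $\sigma\in\Gamma$ one has $\mathrm{tc}(\sigma H_0)=\sigma(\mathrm{tc}\,H_0)$, and since $L$ has coefficients in $\FF_q$ it is fixed by $\sigma$ and hence still divides $\mathrm{tc}(\sigma H_0)$. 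By the uniqueness just established, $\sigma H_0$ is an associate of $H_0$, so the principal ideal $(H_0)$ is $\Gamma$-stable.

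Finally I would descend. Normalising $H_0$ to be monic with respect to a fixed monomial order, the relation $\sigma H_0=\lambda_\sigma H_0$ forces $\lambda_\sigma=1$ by comparing leading coefficients (the support, and hence the leading monomial, is preserved by $\Gamma$, which acts only on coefficients). Hence $\sigma H_0=H_0$ for all $\sigma\in\Gamma$, so $H_0\in\FF_q[X,Y]$, giving an absolutely irreducible factor of $H$ over $\FF_q$. The step I expect to require the most care is this descent: one must make sure that Galois-stability of the ideal $(H_0)$, rather than of the polynomial $H_0$ itself, genuinely yields a representative with coefficients in $\FF_q$, which is where the normalisation is essential and where inattention to scalars (or to inseparability, had one argued through the multiplicities of the $\FF_q$-irreducible factor of $H$ instead) could introduce a gap.
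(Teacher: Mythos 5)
Your proof is correct, and its engine is the same as the paper's: tangent cones multiply, the reduced linear factor $L$ can divide the tangent cone of exactly one absolutely irreducible component (and only with multiplicity one), and since $L$ is fixed by Galois, that component must be Galois-stable. Where you differ is in the decomposition and the endgame. The paper first reduces, via multiplicativity of tangent cones, to the case where $H$ is irreducible in $\FF_q[X,Y]$, then invokes the standard fact that such an $H$ is a constant times the product of the $\Gal(\FF_{q^r}/\FF_q)$-conjugates of a single absolutely irreducible $h\in\FF_{q^r}[X,Y]$; the reducedness of $L$ together with its $\FF_q$-rationality then forces $r=1$, so $H$ itself is absolutely irreducible and no descent is needed. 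You instead factor $H$ over $\overline{\FF_q}$, single out the unique component $H_0$ whose tangent cone $L$ divides, show that the ideal $(H_0)$ is Galois-stable, and then descend explicitly via the monic normalization to land $H_0$ in $\FF_q[X,Y]$. Your route is self-contained (it proves, rather than cites, the conjugate-orbit structure that the paper attributes to a ``routine argument''), at the price of the extra normalization/descent step, which you handle correctly: the leading-coefficient comparison does kill the scalars $\lambda_\sigma$, and since finite fields are perfect, Galois-invariance of the coefficients genuinely places them in $\FF_q$; also, divisibility of $H$ by $H_0$ in $\overline{\FF_q}[X,Y]$ with both polynomials in $\FF_q[X,Y]$ gives divisibility in $\FF_q[X,Y]$, so $H_0$ really is a factor over $\FF_q$. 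One cosmetic simplification: since all coefficients in sight lie in a finite extension $\FF_{q^r}$, you can run the whole argument with the finite cyclic group $\Gal(\FF_{q^r}/\FF_q)$ and avoid any appeal to the profinite group $\Gal(\overline{\FF_q}/\FF_q)$.
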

\par
\begin{proof}
Note that the tangent cone of the product of two polynomials is the product of the individual tangent cones. Therefore, we may assume without loss of generality that $H$ is irreducible in $\FF_q[X,Y]$, otherwise consider the appropriate factor of $H$ in $\FF_q[X,Y]$. By a routine argument (see~\cite{KopYek2008}, for example) there exists $c\in\FF_q$ and an absolutely irreducible polynomial $h\in\FF_{q^r}[X,Y]$ for some positive integer $r$ such that
\[
H=c\prod_{\sigma\in\Gal(\FF_{q^r}/\FF_q)}\sigma(h),
\]
where $\sigma(h)$ means that $\sigma$ is applied to the coefficients of $h$. Letting $T\in\FF_q[X,Y]$ be the tangent cone of $H$ and $t\in\FF_{q^r}[X,Y]$ be the tangent cone of~$h$, we have
\[
T=c\prod_{\sigma\in\Gal(\FF_{q^r}/\FF_q)}\sigma(t).
\]
Since $T$ contains a reduced linear factor over $\FF_q$, there is a unique $\sigma\in\Gal(\FF_{q^r}/\FF_q)$ such that $\sigma(h)$ is divisible by this factor. But since this factor is in $\FF_q[X,Y]$, it divides $\sigma(t)$ for every $\sigma\in\Gal(\FF_{q^r}/\FF_q)$. This forces $r=1$ and thus $H$ is already absolutely irreducible.
\end{proof}
\par
The following proposition combines Proposition~\ref{pro:Hasse_Weil} and Lemmas~\ref{lem:geometric_transform} and \ref{lem:tangent} and summarises the main tool in our proof of Theorem~\ref{thm:planar}. 
\begin{proposition}
\label{pro:main_tool}
Let $f\in\FF_q[X]$ be a polynomial of degree at most $q^{1/4}$ in which the degree of every monomial is not a power of two. Suppose that after the application of a sequence of variable substitutions and transformations of the form $g(X,Y)\mapsto g(X,XY)/X^n$, where $n$ is the smallest degree of a monomial in $g$, to the associated polynomial $F$, we arrive at a polynomial whose tangent cone contains a reduced linear factor over~$\FF_q$. Then~$F$ has an absolutely irreducible factor over~$\FF_q$ and consequently $f$ cannot be planar.
\end{proposition}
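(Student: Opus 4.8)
The plan is to chain together the three preceding results by a descending induction along the given sequence of transformations. Write $F=F_0,F_1,\ldots,F_k$ for the polynomials obtained by successively applying the transformations, so that $F_{j+1}$ is the result of applying the $(j{+}1)$-st transformation to $F_j$ and $F_k$ is the final polynomial whose tangent cone contains a reduced linear factor over $\FF_q$. By Lemma~\ref{lem:tangent}, $F_k$ has an absolutely irreducible factor over $\FF_q$. I would then prove, by descending induction on $j$, that every $F_j$ has an absolutely irreducible factor over $\FF_q$; the case $j=0$ is the desired statement for $F$ itself.

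The inductive step splits according to the type of the transformation carrying $F_j$ to $F_{j+1}$. If it is of the form $g(X,Y)\mapsto g(X,XY)/X^n$ with $n$ the smallest degree of a monomial in $g$, then Lemma~\ref{lem:geometric_transform} applies verbatim with $G=F_j$ and $H=F_{j+1}$, producing an absolutely irreducible factor of $F_j$ over $\FF_q$ from one of $F_{j+1}$. If instead it is a variable substitution, then—taking it to be an invertible affine change of coordinates defined over $\FF_q$—it induces an $\FF_q$-algebra automorphism of $\FF_q[X,Y]$ that extends to every extension $\FF_{q^r}$. Such an automorphism carries factorisations to factorisations and preserves both irreducibility and the field of definition, so an absolutely irreducible factor of $F_{j+1}$ over $\FF_q$ pulls back to one of $F_j$ over $\FF_q$. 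In either case the property is inherited going backwards, which completes the induction.

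With an absolutely irreducible factor of $F$ over $\FF_q$ in hand, I would invoke Proposition~\ref{pro:Hasse_Weil}: since $f$ has degree at most $q^{1/4}$ and the degree of every monomial in $f$ is not a power of two, the hypotheses of that proposition are met, and it concludes directly that $f$ does not induce a planar function on $\FF_q$. This is precisely the assertion that $f$ cannot be planar.

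As for difficulty, the proof of this proposition is essentially bookkeeping: it merely glues the three established results together. The one point demanding care is the treatment of the variable substitutions, where one must insist that they are invertible over $\FF_q$ so that the correspondence between factors, and their fields of definition, is bijective. By contrast, the blow-up–type transformation $g\mapsto g(X,XY)/X^n$ is \emph{not} such an automorphism—it can genuinely create or destroy components—which is exactly why it cannot be handled by the automorphism argument and instead requires the dedicated Lemma~\ref{lem:geometric_transform}. The real work of the paper, namely exhibiting for a given $f$ an explicit sequence of such transformations that terminates in a polynomial whose tangent cone has a reduced linear factor over $\FF_q$, lies entirely outside this proposition and is carried out in the subsequent sections.
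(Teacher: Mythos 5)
Your proposal is correct and matches the paper's intent exactly: the paper states Proposition~\ref{pro:main_tool} without a written proof, describing it simply as the combination of Proposition~\ref{pro:Hasse_Weil} with Lemmas~\ref{lem:geometric_transform} and~\ref{lem:tangent}, which is precisely the induction you spell out (including the correct observation that the variable substitutions used are invertible affine changes of coordinates over $\FF_q$, hence harmless). Nothing is missing; you have merely made explicit the bookkeeping the authors left implicit.
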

\par
We shall also frequently use the following corollary to Lucas's theorem (see~\cite{Fin1947}, for example).
\begin{lemma}
\label{lem:Lucas}
The binomial coefficient $\tbinom{n}{m}$ is even if and only if at least one of the base-$2$ digits of $m$ is greater than the corresponding digit of $n$.
\end{lemma}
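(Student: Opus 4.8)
The plan is to obtain this as an immediate specialisation of Lucas's theorem to the prime $p=2$. Recall that Lucas's theorem states that, for a prime $p$, if $n=\sum_i n_ip^i$ and $m=\sum_i m_ip^i$ are the base-$p$ expansions of $n$ and $m$, then
\[
\binom{n}{m}\equiv\prod_i\binom{n_i}{m_i}\pmod{p}.
\]
First I would write $n$ and $m$ in binary, so that every digit $n_i$ and $m_i$ lies in $\{0,1\}$, and invoke the theorem with $p=2$.

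The key step is then a short case analysis of the individual factors $\binom{n_i}{m_i}$. Since each argument is $0$ or $1$, the only values that occur are $\binom{0}{0}=\binom{1}{0}=\binom{1}{1}=1$ and $\binom{0}{1}=0$. Thus a factor equals $0$ exactly when $n_i=0$ and $m_i=1$, that is, precisely when the $i$-th base-$2$ digit of $m$ exceeds the corresponding digit of $n$; in all other cases the factor equals $1$. Consequently the product $\prod_i\binom{n_i}{m_i}$ vanishes if and only if at least one such digit comparison fails, and otherwise equals $1$.

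Combining these observations, Lucas's theorem gives that $\binom{n}{m}$ is even exactly when this product is $0$, which by the previous step happens precisely when some base-$2$ digit of $m$ is greater than the corresponding digit of $n$. This yields the stated equivalence. There is no genuine obstacle here: the only work is the routine verification of the four digit combinations, so the proof amounts to little more than quoting Lucas's theorem in characteristic two and reading off when a single binary factor can vanish.
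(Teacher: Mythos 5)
Your proof is correct and matches the paper's approach: the paper states this lemma without proof as a corollary of Lucas's theorem (citing Fine), and your argument is exactly the routine specialisation to $p=2$ that the paper leaves implicit. The case analysis of the binary-digit factors $\binom{n_i}{m_i}$ is exactly right, so there is nothing to add.
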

\par
A consequence of Lemma~\ref{lem:Lucas} is that, if $i$ is not a power of two, then $X^{2^{\nu(i)}}Y^{d-i}$ is the monomial of smallest degree in $F(X,Y)$ with coefficient~$A_i$, where $\nu(i)$ is the $2$-adic valuation of~$i$. Note also that the only monomial in $F$ of the form $Y^i$ is $Y^{d-2}$. We shall frequently use these facts without specific reference in our proof of Theorem~\ref{thm:planar}.

\section{Proof of Theorem~\ref{thm:planar}}

As before, we assume that $f\in\FF_q[X]$ is a nonzero polynomial in which the degree of every monomial is not a power of two. We now assume in addition that the degree of $f$ is at most~$q^{1/4}$ and that $f$ is planar on $\FF_q$. We show that this leads to a contradiction.
\par
We shall study the associated polynomial $F$ given in~\eqref{eqn:def_F}. Put $F_0=F$ and define $F_1,F_2,\dots,F_t$ recursively by
\[
F_r(X,Y)=\frac{F_{r-1}(XY,Y)}{Y^{n_r}},
\]
where $n_r$ is the smallest degree of a monomial in $F_{r-1}$ and $t$ is the smallest number such that the tangent cone of $F_t$ (at the origin) is not divisible by~$X$. This $t$ exists because of the presence of the monomial $Y^{d-2}$ in $F$. Since $f$ is not a $2$-polynomial, we also have $t\ge 1$. Define $u$ to be the smallest integer such that the tangent cone of $F_{t-1}$ contains the monomial $X^{2^u}Y^\ell$ for some~$\ell$ (by ``contain'' we mean that the monomial is present with some nonzero coefficient).
\par
In the first part of the proof we show that the tangent cone of $F_t$ equals $Y^{2^u-2}$. To do so, we consider the polynomial $G(X,Y)=F(X+1,Y)$, so that
\[
G(X,Y)=Y^{d-2}+\sum_{i=3}^dA_i\frac{X^i+1+(X+1)^i}{X}Y^{d-i}.
\]
For $3\le i\le d$ and $1\le k\le i-1$, the coefficient of $X^{k-1}Y^{d-i}$ in $G$ is $A_i\binom{i}{k}$. Lemma~\ref{lem:Lucas} then implies that, if $i$ is not a power of two, then
\[
X^{2^{\nu(i)}-1}Y^{d-i}
\]
is the monomial of smallest degree in~$G$ with coefficient~$A_i$ (recall that $\nu(i)$ is the $2$-adic valuation of~$i$). Put $G_0=G$ and define $G_1,G_2,\dots,G_t$ recursively by
\[
G_r(X,Y)=\frac{G_{r-1}(XY,Y)}{Y^{n_r-1}},
\]
where $n_1,n_2,\dots,n_r$ are the same numbers that occur in the definition of $F_1,F_2,\dots,F_t$. Note that $n_r-1$ is the smallest degree of a monomial in~$G_{r-1}$, so that we can apply Proposition~\ref{pro:main_tool} to $G_1,G_2,\dots,G_t$.
\par
In order to prove that the tangent cone of~$F_t$ equals $Y^{2^u-2}$, we require the following two lemmas 
\begin{lemma}
\label{lem:u_not_one}
We have $2\le u\le \nu(d)$.
\end{lemma}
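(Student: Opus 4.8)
The plan is to reinterpret the transformation $F_{r-1}\mapsto F_r$ as a shear on exponent vectors. Writing a monomial of $F_{r-1}$ as $X^aY^b$, the substitution $X\mapsto XY$ followed by division by $Y^{n_r}$ sends $(a,b)\mapsto(a,a+b-n_r)$; in particular the $X$-exponent is preserved and distinct monomials stay distinct, so the support of $F_r$ is an affine image of the support of $F$ with the coefficients unchanged. Compounding these shears, I would show that the tangent cone of $F_r$ is the image of the part of $F$ minimizing the linear functional $(r+1)a+b$. Now the only monomial of $F$ with $X$-exponent $0$ is $Y^{d-2}$ (in characteristic two the coefficient of $X^0Y^{d-i}$ vanishes for every $i\ge 3$), and by Lemma~\ref{lem:Lucas} the leftmost monomial in the row $Y^{d-i}$ is $X^{2^{\nu(i)}}Y^{d-i}$. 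Hence the tangent cone of $F_r$ fails to be divisible by $X$ exactly when $(0,d-2)$ becomes a minimizer of $(r+1)a+b$, so for $r=t-1$ it is not a minimizer and the tangent cone of $F_{t-1}$ is supported on the leftmost monomials $X^{2^{\nu(i)}}Y^{d-i}$ for those indices $i$ maximizing $i-t\,2^{\nu(i)}$. Every such $X$-exponent is a power of two, and therefore $u$ equals the minimum of $\nu(i)$ over these maximizing indices.

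For the upper bound I would simply compare with the index $i=d$, which is admissible since $A_d\ne 0$ and $d$ is not a power of two. If $\nu(i)>\nu(d)$, then
\[
\bigl(d-t\,2^{\nu(d)}\bigr)-\bigl(i-t\,2^{\nu(i)}\bigr)=(d-i)+t\,\bigl(2^{\nu(i)}-2^{\nu(d)}\bigr)>0
\]
because $i\le d$ and $t\ge 1$, so such an $i$ cannot be a maximizer. Consequently every maximizing index satisfies $\nu(i)\le\nu(d)$, which gives $u\le\nu(d)$; this half of the statement is unconditional.

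For the lower bound I would argue by contradiction, using that $f$ is planar. If $u=0$, some maximizing index is odd, so the tangent cone of $F_{t-1}$ has smallest $X$-degree $1$; thus $X$ divides it with multiplicity exactly one and is a reduced linear factor over $\FF_q$. By Lemma~\ref{lem:tangent} and Proposition~\ref{pro:main_tool}, $F$ then has an absolutely irreducible factor over $\FF_q$, so $f$ is not planar, a contradiction. If $u=1$, the factor $X$ occurs in the tangent cone of $F_{t-1}$ with multiplicity two and is no longer reduced, and this is precisely the case that the auxiliary polynomial $G=F(X+1,Y)$ is designed to handle. The leftmost monomial of $G$ in the row $Y^{d-i}$ is $X^{2^{\nu(i)}-1}Y^{d-i}$, one step to the left, and the functional values are uniformly shifted, so the tangent cone of $G_{t-1}$ is supported on the same maximizing rows but with $X$-exponents $2^{\nu(i)}-1$. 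When $u=1$ the smallest of these is $1$, so $X$ is a reduced linear factor of the tangent cone of $G_{t-1}$; applying Lemma~\ref{lem:tangent} and Proposition~\ref{pro:main_tool} to the sequence $G_1,\dots,G_t$, and using that $G$ is obtained from $F$ by a variable substitution, again contradicts planarity. Hence $u\ge 2$.

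The Newton-polygon bookkeeping and the upper bound are routine; the genuine obstacle is the case $u=1$, where the natural factor $X$ of the tangent cone of $F_{t-1}$ fails to be reduced. The essential idea is that translating the singularity from $X=0$ to $X=1$ shifts every leftmost monomial one unit to the left, lowering the multiplicity of $X$ to one without changing which rows contribute to the tangent cone. The main point to verify carefully is that this shift is compatible with the normalization of the transformations, namely that $n_r-1$ really is the smallest monomial degree of $G_{r-1}$, so that the same integers $n_r$ may be reused and the machinery of Proposition~\ref{pro:main_tool} applies verbatim to the $G$-sequence.
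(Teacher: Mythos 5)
Your proposal is correct and, at its core, follows the same route as the paper: planarity enters only to exclude $u=0$ and $u=1$ via Lemma~\ref{lem:tangent} and Proposition~\ref{pro:main_tool}, with the $u=1$ case handled through the auxiliary sequence $G_0,G_1,\dots$ exactly as the paper does, including the key verification that $n_r-1$ is the least monomial degree of $G_{r-1}$ and that the tangent cone of $G_{t-1}$ is that of $F_{t-1}$ divided by $X$. Where you genuinely differ is the upper bound $u\le\nu(d)$. The paper's justification is the one-liner that $F$, ``and therefore also $F_{t-1}$'', contains the monomial $X^{2^{\nu(d)}}$; read literally this is not sound, because the image of $X^{2^{\nu(d)}}$ in $F_{t-1}$ is $X^{2^{\nu(d)}}Y^{(t-1)2^{\nu(d)}-n_1-\cdots-n_{t-1}}$, which in general is neither pure in $X$ nor of minimal degree, whereas $u$ is defined through the tangent cone. (For instance, for $f=X^{24}+X^{14}$ one finds $t=5$, the tangent cone of $F_4$ is $X^2$, and the image of $X^8$ in $F_4$ is $X^8Y^{14}$, far above the tangent cone; still $u=1\le 3=\nu(d)$ holds.) Your comparison argument --- under the functional $ta+b$ any row with $\nu(i)>\nu(d)$ is strictly beaten by the row $i=d$ because $t\ge 1$ and $i\le d$ --- supplies exactly the missing step, and is presumably what the authors intended; your explicit shear bookkeeping on exponent vectors is also what makes transparent the two assertions the paper states without proof (the normalization $n_r-1$ for the $G$-sequence and the ``divide by $X$'' relation between the tangent cones). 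In short: same architecture, but your write-up repairs the one loose joint in the paper's own proof.
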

\begin{proof}
By assumption, $f$ contains no monomials whose degree is a power of two. In particular $d$ is not a power of two. Hence $F$, and therefore also~$F_{t-1}$, contains the monomial $X^{2^{\nu(d)}}$. Thus we have $u\le \nu(d)$.
\par
If $u=0$, then the tangent cone of $F_{t-1}$ would be divisible by $X$, but not by~$X^2$. This leads to a contradiction by Proposition~\ref{pro:main_tool}.
\par
Now suppose that $u=1$. Then the tangent cone of $F_{t-1}$ contains the monomial $X^2Y^\ell$ for some $\ell$ and does not contain the monomial $XY^{\ell+1}$. By the remarks preceding the lemma, for $r<t$, the tangent cone of $G_r$ equals the tangent cone of~$F_r$ divided by~$X$. Therefore the tangent cone of $G_{t-1}$ is divisible by~$X$ and not by $X^2$. This again leads to a contradiction by Proposition~\ref{pro:main_tool}.
\end{proof}
\par
\begin{lemma}
\label{lem:divisibility_of_nr}
For all $r\le t$ we have $2^u\mid n_r$ .
\end{lemma}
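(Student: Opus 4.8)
The plan is to reinterpret the recursion $F_{r-1}\mapsto F_r$ through the Newton polygon of $F$ and to exploit the $2$-adic structure of its support. Unwinding the recursion gives $F_r(X,Y)=F(XY^r,Y)/Y^{N_r}$ with $N_r=n_1+\cdots+n_r$, and a one-line induction identifies $N_r=\min\{ra+b\}$, the minimum ranging over all exponent pairs $(a,b)$ of monomials $X^aY^b$ occurring in $F$. Thus the tangent cone of $F_{r-1}$ is precisely the face of the support on which $ra+b$ is minimized, its $X$-exponents being exactly the $a$-coordinates on that face, and $n_r=N_r-N_{r-1}$ is a difference of this support function. It therefore suffices to control $N_r\bmod 2^u$ for $0\le r\le t$.

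Next I would pin down the lower-left vertices of this polygon. By the remark following Lemma~\ref{lem:Lucas}, for every $i$ with $A_i\neq 0$ the left-most monomial in the row $b=d-i$ is $X^{2^{\nu(i)}}Y^{d-i}$, while the only monomial with $a=0$ is $Y^{d-2}$. Any support point lying on a lower-left edge must be left-most in its row, since a point strictly to its left would fall outside the hull; hence every vertex of the lower-left hull other than $(0,d-2)$ has the form $(2^{\nu(i)},d-i)$. In particular its $a$-coordinate is a power of two, and since $2^{\nu(i)}\mid i$ its $b$-coordinate satisfies $b\equiv d\pmod{2^{\nu(i)}}$.

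The crucial step is to determine which vertices are active for $0\le r\le t$. For $r\le t$ the minimizing face avoids $(0,d-2)$ (its tangent cone being divisible by $X$), so by the previous paragraph all its $X$-exponents are powers of two, with minimum value $2^u$ attained on the face at $r=t$. Because $N_r=\min\{ra+b\}$ is concave in $r$, the active $a$-coordinate is non-increasing; at $r=0$ it equals $2^{\nu(d)}$ and at $r=t$ it reaches $2^u$. Since $u\le\nu(d)$ by Lemma~\ref{lem:u_not_one}, every active vertex for $0\le r\le t$ has $a$-coordinate a power of two lying between $2^u$ and $2^{\nu(d)}$, hence divisible by $2^u$; the associated index $i$ satisfies $\nu(i)\ge u$, so $2^u\mid i$ and therefore $b\equiv d\pmod{2^u}$.

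Evaluating $N_r$ at such an active vertex then gives $N_r=ra+b\equiv b\equiv d\pmod{2^u}$ for every $r$ with $0\le r\le t$ (when the face is an edge, either endpoint may be used, both yielding the value $N_r$). Consequently $n_r=N_r-N_{r-1}\equiv 0\pmod{2^u}$ for all $r\le t$, which is the assertion. I expect the main obstacle to be the second step: establishing that the only lower-left hull vertices are the left-most row points $(2^{\nu(i)},d-i)$, where Lucas's theorem enters, together with the bookkeeping ensuring the active $a$-coordinate never drops below $2^u$ for $r\le t$. This last point is exactly what the definition of $u$ via the face at $r=t$, combined with the bound $u\le\nu(d)$, is designed to guarantee.
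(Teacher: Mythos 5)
Your proof is correct, but it reorganizes the argument in a genuinely different way from the paper. The paper proceeds by induction on $s$: assuming $2^u\mid n_r$ for all $r\le s$, it tracks the preimage in $F_s$ of the monomial $X^{2^u}Y^\ell$ from the tangent cone of $F_{t-1}$, shows its degree is divisible by $2^u$ (using the inductive hypothesis together with $2^u\mid d$ and $2^u\mid i$), and then argues that any monomial of smaller degree in $F_s$ either has $X$-exponent $2^{\nu(j)}$ with $\nu(j)\ge u$ (so again degree divisible by $2^u$) or, pushed forward to $F_{t-1}$, contradicts the minimality of $u$. Your version removes the induction entirely: the closed form $N_r=\min\{ra+b\}$ over the support of $F$, the observation that for $1\le r\le t$ every minimizing point must be left-most in its row and distinct from $(0,d-2)$ (hence of the form $(2^{\nu(i)},d-i)$ by Lucas's theorem), and the slope-monotonicity consequence of concavity pinning all active $a$-coordinates at or above $2^u$, together give $N_r\equiv d\pmod{2^u}$ for all $r\le t$ at once; the lemma follows by differencing. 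This buys transparency exactly where the paper is tersest: the step ``the image in $F_{t-1}$ contradicts the minimality of $u$'' is replaced by the clean inequality that active $a$-coordinates cannot increase with $r$, at the modest cost of setting up the Newton-polygon dictionary (the injectivity of $(a,b)\mapsto(a,ra+b)$, hence absence of cancellation, deserves an explicit sentence). One small repair: your claim that at $r=0$ the active points have $a$-coordinate a power of two (namely $2^{\nu(d)}$) is false --- at $r=0$ the entire bottom row $b=0$ is active, and its $X$-exponents need not be powers of two --- but this is harmless, since $N_0=0\equiv d\pmod{2^u}$ follows directly from $2^u\mid d$, which Lemma~\ref{lem:u_not_one} supplies; the face structure is only needed for $1\le r\le t$.
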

\begin{proof}
By Lemma~\ref{lem:u_not_one} we have $u\le \nu(d)$, and so $2^u\mid d$. Let $s$ be an integer satisfying $0\le s\le t-1$ and assume that $2^u\mid n_r$ for all $r\le s$, which is vacuously true for $s=0$. We proceed by induction on $s$. Recall that $F_{t-1}$ contains the monomial $X^{2^u}Y^\ell$ for some $\ell$. The preimage in~$F_s$ of this monomial is of the form
\begin{equation}
X^{2^u}Y^{2^us-n_1-\cdots-n_s+d-i}   \label{eqn:monomial_Fs_1}
\end{equation}
for some $i$ satisfying $\nu(i)=u$. By the inductive hypothesis, $2^u$ divides the degree of~\eqref{eqn:monomial_Fs_1}. Now suppose that $F_s$ also contains a monomial
\begin{equation}
X^{2^{\nu(j)}}Y^{2^{\nu(j)}s-n_1-\cdots-n_s+d-j}   \label{eqn:monomial_Fs_2}
\end{equation}
of degree smaller than the degree of~\eqref{eqn:monomial_Fs_1}. If $\nu(j)<u$, then by looking at the image in $F_{t-1}$ of~\eqref{eqn:monomial_Fs_2}, we find a contradiction to the minimality of $u$. Otherwise, $2^u$ divides the degree of~\eqref{eqn:monomial_Fs_2} and so $2^u$ divides the smallest degree of a monomial in $F_s$. Hence $2^u\mid n_{s+1}$, as required.
\end{proof}
\par
We now show that the tangent cone of $F_t$ equals $Y^{2^u-2}$.
\begin{lemma}
\label{lem:tangent_cone_Ft}
The tangent cone of $F_t$ is $Y^{2^u-2}$.
\end{lemma}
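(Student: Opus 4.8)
The plan is to follow each monomial of $F$ through the iterated substitutions $g\mapsto g(XY,Y)/Y^{n}$. First I would observe that the monomial $X^kY^{d-i}$ of $F$ reappears in $F_s$ as $X^kY^{sk+(d-i)-N_s}$, where $N_s=n_1+\cdots+n_s$, so its total degree in $F_s$ is $(s+1)k+(d-i)-N_s$; as the substitution is injective on monomials, no cancellation occurs. Minimising this degree over all monomials of $F$ (including the pure term $Y^{d-2}$, treated as the case $k=0$, $i=2$) gives the identity $N_{s+1}=\min_{(k,i)}\big[(s+1)k+d-i\big]$, valid for $0\le s\le t$. This turns everything into an elementary optimisation over the monomials of $F$.

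Second, I would determine the degree of the tangent cone of $F_t$. Since the tangent cone of $F_{t-1}$ is divisible by $X$, the pure term does not attain the minimum defining $N_t$, giving $N_t<d-2$; since the tangent cone of $F_t$ is not divisible by $X$, the pure term $Y^{d-2-N_t}$ does attain the minimum at the next level, giving $N_{t+1}=d-2$ and minimal degree $d-2-N_t$ for $F_t$. Let $(k_0,i_0)$ be a level-$t$ minimiser of smallest $X$-exponent. Then $k_0$ is a power of two: otherwise $X^{2^{\nu(i_0)}}Y^{d-i_0}$, also a monomial of $F$, would give a strictly smaller value; so $k_0=2^{\nu(i_0)}$, and by the definition of $u$ this forces $k_0=2^u$ and $\nu(i_0)=u$. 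Evaluating this monomial at levels $t$ and $t+1$ yields $t\,2^u+d-i_0=N_t<d-2$ and $(t+1)2^u+d-i_0\ge N_{t+1}=d-2$, which trap $i_0$ in $\big(t\,2^u+2,\,(t+1)2^u+2\big]$. As $2^u\mid i_0$ and $u\ge 2$ by Lemma \ref{lem:u_not_one}, the only possibility is $i_0=(t+1)2^u$. Hence $N_t=d-2^u$, so the tangent cone of $F_t$ has degree $2^u-2$ and contains $Y^{2^u-2}$.

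Third, I would show that $Y^{2^u-2}$ is the whole tangent cone. The monomials of $F_t$ of degree $2^u-2$ are exactly the images of the $X^kY^{d-i}$ with $i=(t+1)k+2$, and their coefficients are $A_{(t+1)k+2}\big[\binom{(t+1)k+1}{k}+1\big]$ for $1\le k\le 2^u-2$. Lemma \ref{lem:Lucas} makes many of these vanish and pins the tangent cone to a very restricted shape. For the terms that could still survive I would argue against planarity: if the tangent cone were not $Y^{2^u-2}$, then it, or its image under one further admissible transformation, would carry a reduced linear factor over $\FF_q$, so Proposition \ref{pro:main_tool} would contradict the assumed planarity of $f$; hence the tangent cone is $Y^{2^u-2}$.

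The hard part is this last implication. A homogeneous binary form can differ from $Y^{2^u-2}$ and still have no reduced linear factor --- a perfect square $(Y+\alpha X)^{2}$, or a power times an irreducible form, are the typical dangers --- so exhibiting a reduced factor is not automatic. To exclude these I would use that we have survived to stage $t$: every earlier tangent cone, that of $F_0,\dots,F_{t-1}$, is divisible by $X$ but carries no reduced linear factor, and this survival --- encoded by $u\ge 2$ and the divisibilities $2^u\mid n_r$ of Lemma \ref{lem:divisibility_of_nr} --- rigidly restricts the surviving coefficients. I expect the parity of $\binom{(t+1)k+1}{k}$ (always even for odd $k$) to combine with these constraints so that a single nontrivial term leaves $Y$ itself as a reduced factor, and I would use the shifted chain $G_r$ built from $G=F(X+1,Y)$ --- the same device that settled $u\in\{0,1\}$ in Lemma \ref{lem:u_not_one} --- to convert a residual term into a detectable reduced linear factor.
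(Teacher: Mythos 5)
Your first two paragraphs are correct, and they establish the ``easy half'' of the lemma --- that the tangent cone of $F_t$ has degree $2^u-2$ and contains $Y^{2^u-2}$ --- by a route genuinely different from the paper's: the paper deduces the exponent congruence from $2^u\mid d$ together with Lemma~\ref{lem:divisibility_of_nr} and caps the degree at $2^u$ via the monomial $X^{2^u}$, whereas you trap $i_0$ in the interval $(t2^u+2,\,(t+1)2^u+2]$ and conclude $i_0=(t+1)2^u$ and $N_t=d-2^u$. This part is clean and even yields a fact the paper never records, namely that $t+1$ is odd (since $\nu(i_0)=u$).

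The genuine gap is your third part, which is the actual substance of the lemma: ruling out every monomial $X^kY^{2^u-2-k}$ with $k\ge 1$. What you give there is a plan, not a proof, and its two concrete expectations are miscalibrated. First, the parity points the wrong way: when $\binom{(t+1)k+1}{k}$ is even, the bracket $\binom{i-1}{k}+1$ equals $1$ in characteristic two, so for odd $k$ these monomials are \emph{present} whenever $A_{(t+1)k+2}\ne 0$; Lucas does not make them vanish. They are excluded instead because $i=(t+1)k+2$ is then odd, so $F$ also contains $XY^{d-i}$ with coefficient $A_i$, whose image in $F_t$ has degree $2^u-2-(t+1)(k-1)<2^u-2$ for $k\ge 3$, and minimality forces $A_i=0$. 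Second, the surviving dangerous cases are $k=1$ and $k=2$, and neither leaves ``$Y$ itself'' as a reduced factor: for $k=1$ the cone is $Y^{2^u-3}(\beta X+Y)$, whose reduced linear factor is $\beta X+Y$; for $k=2$ with vanishing cross term the cone is $Y^{2^u-4}(\gamma X+Y)^2$ with $\gamma^2=\alpha$, which has \emph{no} reduced linear factor at all --- precisely the square danger you flag yourself --- so Proposition~\ref{pro:main_tool} applied to the chain $F_0,\dots,F_t$ cannot finish, and one must pass to the shifted chain $G_0,\dots,G_t$, whose stage-$t$ tangent cone $Y^{2^u-4}(\alpha X+\beta Y)$, $\alpha\ne0$, is what detects the contradiction. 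You name the $G_r$ device but never deploy it; the paper's proof consists exactly of this three-way case analysis ($v\ge 2$ by Lucas and minimality, $v=1$ via $G_t$, $v=0$ directly), and it is missing from your write-up. Ironically, the fact your own second paragraph supplies ($t$ even) would kill the $k=2$ case outright, since then $\binom{2t+3}{2}$ is odd and the coefficient $A_{2t+4}\bigl[\binom{2t+3}{2}+1\bigr]$ vanishes --- a shortcut the paper does not use --- but as written your proposal carries out neither this argument nor the paper's.
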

\begin{proof}
Notice that, since the tangent cone of $F_t$ is not divisible by $X$, it must contain the image of the monomial $Y^{d-2}$ in $F$, namely $Y^{d-2-n_1-\cdots-n_t}$. Since $2^u\mid d$ by Lemma~\ref{lem:u_not_one} and $2^u\mid n_r$ for all $r\le t$ by Lemma~\ref{lem:divisibility_of_nr}, we find that that the tangent cone of $F_t$ contains $Y^j$ for some $j$ satisfying $j\equiv -2\pmod {2^u}$. 
\par
By definition, the tangent cone of $F_{t-1}$ contains $X^{2^u}Y^\ell$ for some $\ell$, and therefore~$F_t$ contains $X^{2^u}$. Hence the tangent cone of $F_t$ has degree at most~$2^u$. Since we also have $u\ge 2$ by Lemma~\ref{lem:u_not_one}, we find that $j=2^u-2$. Hence the tangent cone of $F_t$ has degree $2^u-2$.
\par
Now suppose for a contradiction that the tangent cone of $F_t$ contains $X^{2^v}Y^{2^u-2-2^v}$ for some integer $v$ satisfying $0\le v<u$. By Lemma~\ref{lem:divisibility_of_nr}, the preimage in $F$ of this monomial is of the form $X^{2^v}Y^\ell$ for some $\ell$ satisfying $\ell\equiv -2\pmod {2^v}$. If $v\ge 2$, then Lemma~\ref{lem:Lucas} implies that $F$ also contains $X^2Y^\ell$, whose image in $F_t$ has degree strictly smaller than $2^u-2$, a contradiction. If $v=1$, then the tangent cone of $F_t$ equals
\[
\alpha X^2Y^{2^u-4}+\beta XY^{2^u-3}+Y^{2^u-2}
\]
for some $\alpha,\beta\in\FF_q$ with $\alpha\ne 0$, and the tangent cone of $G_t$ equals
\[
\alpha XY^{2^u-4}+\beta Y^{2^u-3}=Y^{2^u-4}(\alpha X+\beta Y),
\]
which gives a contradiction by Proposition~\ref{pro:main_tool}. If $v=0$, then the tangent cone of $F_t$ must be $Y^{2^u-3}(\beta X+Y)$ for some $\beta\in\FF_q$ with $\beta\ne 0$, which again gives a contradiction by Proposition~\ref{pro:main_tool}.
\end{proof}
\par
In view of Lemma~\ref{lem:tangent_cone_Ft}, define
\[
F_{t+1}(X,Y)=\frac{F_t(X,XY)}{X^{2^u-2}}.
\]
Then $F_{t+1}$ still contains $Y^{2^u-2}$ and the tangent cone of $F_{t+1}$ has degree $2$ and contains $X^2$ (coming from $X^{2^u}Y^\ell$ in $F_{t-1}$). Note that, since $Y^{2^u-2}$ is the unique monomial of degree $2^u-2$ in $F_t$, the only monomial of the form~$Y^i$ in $F_{t+1}$ is $Y^{2^u-2}$.
\par
Now define
\[
F_{t+2}(X,Y)=\frac{F_{t+1}(XY^{2^{u-1}-2},Y)}{Y^{2^u-4}}.
\]
Note that $F_{t+2}$ is obtained from $F_{t+1}$ by $2^{u-1}-2$ applications of the transformation $g(X,Y)\mapsto g(XY,Y)/Y^2$. Also, in each step the smallest degree of a monomial is $2$: a constant term cannot appear because $F_{t+1}$ contains only one monomial that is pure in $Y$, namely $Y^{2^u-2}$, and a linear tangent cone would lead to a contradiction by Proposition~\ref{pro:main_tool}. The tangent cone of~$F_{t+2}$ contains $X^2$ and $Y^2$. We now show that it does not contain~$XY$.
\begin{lemma}
The tangent cone of $F_{t+2}$ equals~$\alpha X^2+Y^2$ for some nonzero $\alpha\in\FF_q$.
\end{lemma}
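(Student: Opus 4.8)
The plan is to prove that $\beta=0$, writing the tangent cone of $F_{t+2}$ as $\alpha X^2+\beta XY+Y^2$. This is indeed its general shape: the preceding discussion shows the tangent cone has degree $2$, its $X^2$-coefficient is the nonzero $\alpha$ carried over from $X^{2^u}Y^\ell$ in $F_{t-1}$, and its $Y^2$-coefficient is the image of the coefficient $1$ of $Y^{d-2}$ in $F$, tracked along the chain. Since $\alpha X^2+Y^2=(\sqrt\alpha\,X+Y)^2$ in characteristic two, the claim is equivalent to saying the tangent cone is a perfect square of an $\FF_q$-rational line; so I would argue by contradiction, assuming $\beta\ne0$, in which case $\alpha X^2+\beta XY+Y^2$ is squarefree. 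The goal is then to reach, after admissible substitutions and monomial transformations, a tangent cone with a reduced linear factor over $\FF_q$ and invoke Proposition~\ref{pro:main_tool}.

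First I would settle the case in which the two linear factors are defined over $\FF_q$, i.e.\ $\mathrm{Tr}_{\FF_q/\FF_2}(\alpha/\beta^2)=0$. Then some $c\in\FF_q$ satisfies $c^2+\beta c+\alpha=0$, and the $\FF_q$-linear substitution $Y\mapsto Y+cX$ sends $\alpha X^2+\beta XY+Y^2$ to $(\alpha+\beta c+c^2)X^2+\beta XY+Y^2=Y(\beta X+Y)$, using $\mathrm{char}\,\FF_q=2$. The resulting tangent cone contains the reduced $\FF_q$-linear factor $Y$, so Proposition~\ref{pro:main_tool} contradicts the planarity of $f$.

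The main obstacle is the complementary case $\mathrm{Tr}_{\FF_q/\FF_2}(\alpha/\beta^2)=1$, where $\alpha X^2+\beta XY+Y^2$ is irreducible over $\FF_q$ and the two tangent directions at the origin are conjugate over $\FF_{q^2}$. No $\FF_q$-linear substitution can split the conic, since the splitting type of a binary quadratic is a $\mathrm{GL}_2(\FF_q)$-invariant, and the two monomial transformations both push the origin off the curve (they make the constant term equal to $\alpha$ or to $1$); moreover the translate-based device of Lemma~\ref{lem:tangent_cone_Ft} is unavailable here, because the companion chain was only built up to $G_t$ and does not persist past stage~$t$. Geometrically the origin is now a node with conjugate tangents, and such a node sits either on a single absolutely irreducible $\FF_q$-component --- the outcome I want --- or on two absolutely irreducible components conjugate over $\FF_{q^2}$; only the second configuration fails to supply an absolutely irreducible factor over $\FF_q$, and it must be excluded.

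To do so I would descend from $\FF_{q^2}$. Substituting $Y\mapsto Y+\lambda X$ for a root $\lambda\in\FF_{q^2}$ of $T^2+\beta T+\alpha$ produces the tangent cone $Y(Y+\beta X)$, so Lemma~\ref{lem:tangent} applied over $\FF_{q^2}$ (the substitution being an automorphism of $\FF_{q^2}[X,Y]$) shows that $F_{t+2}$ has an absolutely irreducible factor $h$ over $\FF_{q^2}$. If $h$ is fixed by the Frobenius of $\FF_{q^2}/\FF_q$ then it is already defined over $\FF_q$, and running Lemma~\ref{lem:geometric_transform} back along the chain gives $F$ an absolutely irreducible $\FF_q$-factor, contradicting planarity through Proposition~\ref{pro:Hasse_Weil}. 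Otherwise $F_{t+2}=c\,h\,\bar h$ with $\bar h\ne h$, and here I would exploit the recorded fact that $Y^2$ is the only pure power of $Y$ in $F_{t+2}$, so that $F_{t+2}(0,Y)=Y^2$: a conjugate-pair factorization forces $\deg F_{t+2}$ to be even with $\deg h=\tfrac12\deg F_{t+2}$ and pins down $h(0,Y)$ and $\bar h(0,Y)$ as conjugate linear forms, which I expect to clash with the monomial bookkeeping of the chain, in particular with the parity of $\deg F_{t+2}$. This descent step --- excluding the splitting into conjugate components --- is the part I expect to require the most care, and it is precisely where the argument departs from the purely local tangent-cone criterion used elsewhere.
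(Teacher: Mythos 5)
Your setup and your treatment of the split case $\mathrm{Tr}_{\FF_q/\FF_2}(\alpha/\beta^2)=0$ are correct: writing the tangent cone as $\alpha X^2+\beta XY+Y^2$ with $\alpha\ne 0$ is justified, and when $c^2+\beta c+\alpha=0$ has a root $c\in\FF_q$ the substitution $Y\mapsto Y+cX$ yields the tangent cone $Y(\beta X+Y)$, whose factor $Y$ is reduced since $\beta\ne0$, so Proposition~\ref{pro:main_tool} applies. The gap is in the other case, and it is not one that more care will fill. First, your dichotomy is false: if the absolutely irreducible $\FF_{q^2}$-factor $h$ is not Frobenius-fixed, you may only conclude $h\bar h\mid F_{t+2}$, not $F_{t+2}=c\,h\,\bar h$; the tangent cone at the origin says nothing about factors whose zero sets avoid the origin, so $F_{t+2}=h\,\bar h\,g$ with $g(0,0)\ne0$ is possible. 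This destroys both of your intended contradictions: the identity $F_{t+2}(0,Y)=Y^2$ is perfectly consistent with $h(0,Y)=\gamma Y$, $\bar h(0,Y)=\bar\gamma Y$ and $g(0,Y)$ a nonzero constant, and no parity constraint on $\deg F_{t+2}$ survives because $\deg g$ is uncontrolled (nor does the chain of transformations determine the parity of $\deg F_{t+2}$ in the first place).

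Second, and more fundamentally, the configuration you must exclude --- two conjugate absolutely irreducible components through the node, every other component off the origin and likewise defined only over proper extensions --- cannot be excluded by any planarity or point-counting argument: for an $\FF_q$-rational point $P$ one has $\bar h(P)=\overline{h(P)}$, so the $\FF_q$-points of $h\bar h=0$ lie on $h\cap\bar h$, a set of at most $(\deg h)^2$ points by B\'ezout, and such a curve has few rational points, so planarity is not violated. Thus in the only hard case your method reduces the lemma to a statement your tools cannot reach; ruling it out requires structural information about $F_{t+2}$, which is exactly what the paper extracts instead. The paper's proof is purely combinatorial and avoids irreducibility and Galois descent altogether: assuming $XY$ occurs in the tangent cone of $F_{t+2}$, it traces this monomial back to $X^{2^{u-1}}Y^{2^{u-1}-1}$ in $F_t$, hence to $X^{2^{u-1}}Y^\ell$ with $\ell$ odd in $F$; Lemma~\ref{lem:Lucas} then forces the companion monomial $XY^\ell$ to occur in $F$ with the same nonzero coefficient, and the image of $XY^\ell$ in $F_t$ would have degree strictly smaller than $2^u-1$, contradicting Lemma~\ref{lem:tangent_cone_Ft}. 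In other words, the coefficient of $XY$ vanishes identically, so your trace dichotomy never arises.
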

\begin{proof}
A monomial $X^kY^j$ in $F_t$ is mapped to $X^{k+j-2^u+2}Y^j$ in $F_{t+1}$ and to
\[
X^{k+j-2^u+2}Y^{(2^{u-1}-2)(k+j-2^u+2)+j-2^u+4}
\]
in $F_{t+2}$. Now suppose, for a contradiction, that the latter monomial is $XY$, which means that $k=2^{u-1}$ and $j=2^{u-1}-1$. Since $u\ge 2$ by Lemma~\ref{lem:u_not_one} and $2^u\mid n_r$ for all $r\le t$ by Lemma~\ref{lem:divisibility_of_nr}, the corresponding monomial in $F$ is $X^{2^{u-1}}Y^\ell$ for some odd $\ell$. Lemma~\ref{lem:Lucas} then implies that $F$ also contains the monomial $XY^\ell$ with the same nonzero coefficient as $X^{2^{u-1}}Y^\ell$. However, since $u\ge 2$ and $t\ge 1$, the image in $F_t$ of $XY^\ell$ has degree strictly smaller than $2^u-1$. This contradicts Lemma~\ref{lem:tangent_cone_Ft}, namely that the tangent cone of $F_t$ equals $Y^{2^u-2}$.
\end{proof}
\par
In the remainder of our proof of Theorem~\ref{thm:planar} we shall apply further transformations to $F_{t+2}$, which will ultimately lead to a contradiction. To do so, we first study the images in $F_{t+2}$ of the monomials in $F$. We record the properties of these images in the following lemma. 
\begin{lemma}
\label{lem:images_monomials}
Suppose that $F$ contains the monomial $X^kY^{d-i}$. Then its image in $F_{t+2}$ is $X^rY^s$, where
\begin{align*}
r&=k(t+1)-i+2,\\
s&=k(2^{u-1}(t+1)-t-2)-i(2^{u-1}-1)+2^u.
\end{align*}
\end{lemma}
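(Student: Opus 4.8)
The plan is to prove the lemma by a direct computation, tracking the image of a fixed monomial through the chain of substitutions defining $F_1,\dots,F_{t+2}$. Since each step is a monomial substitution followed by division by a power of a single variable, it sends every monomial to a monomial, so the image of $X^kY^{d-i}$ is well defined irrespective of any cancellation. First I would record the effect of each elementary transformation on a generic monomial $X^aY^b$. The substitution $g(X,Y)\mapsto g(XY,Y)/Y^{n_r}$ used to pass from $F_{r-1}$ to $F_r$ sends $X^aY^b$ to $X^aY^{a+b-n_r}$; the substitution $g(X,Y)\mapsto g(X,XY)/X^{2^u-2}$ used to form $F_{t+1}$ sends $X^aY^b$ to $X^{a+b-2^u+2}Y^b$; and the transformation $g(X,Y)\mapsto g(XY^{2^{u-1}-2},Y)/Y^{2^u-4}$ used to form $F_{t+2}$ sends $X^aY^b$ to $X^aY^{a(2^{u-1}-2)+b-2^u+4}$.

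Next I would compose the first $t$ steps. Applying the first rule $t$ times to $X^kY^{d-i}$ leaves the exponent of $X$ equal to $k$ and, after telescoping the $Y$-exponent, produces $tk+d-i-N$, where $N=n_1+\cdots+n_t$. The one substantive input is the value of $N$: applying this same computation to the monomial $Y^{d-2}$ (the case $k=0$, $i=2$) shows that its image in $F_t$ is $Y^{d-2-N}$, while Lemma~\ref{lem:tangent_cone_Ft} asserts that the tangent cone of $F_t$ is $Y^{2^u-2}$. Comparing exponents gives the key relation
\[
n_1+\cdots+n_t=d-2^u.
\]

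Finally I would apply the remaining two transformations and substitute this relation. Feeding $X^kY^{tk+d-i-N}$ through the rule for $F_{t+1}$ turns the $X$-exponent into $k(t+1)+d-i-N-2^u+2$, which collapses to $r=k(t+1)-i+2$ once $N=d-2^u$ is used, while the $Y$-exponent is unchanged and equals $tk-i+2^u$. Feeding the resulting monomial through the rule for $F_{t+2}$ then gives the $Y$-exponent $r(2^{u-1}-2)+(tk-i+2^u)-2^u+4$, and expanding and collecting the coefficients of $k$, of $i$, and the constants yields exactly $s=k(2^{u-1}(t+1)-t-2)-i(2^{u-1}-1)+2^u$. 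I do not expect any genuine obstacle here: the argument is pure bookkeeping, and the only point requiring care is to invoke the relation $n_1+\cdots+n_t=d-2^u$ correctly, so that the dependence on $d$ and on the individual $n_r$ disappears and the clean formulas for $r$ and $s$ emerge.
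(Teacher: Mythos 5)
Your proposal is correct and takes essentially the same route as the paper: the paper's proof likewise tracks the monomial through the $t$ substitutions to get $X^kY^{kt+d-i-n_1-\cdots-n_t}$, extracts the relation $n_1+\cdots+n_t=(d-2)-(2^u-2)=d-2^u$ from Lemma~\ref{lem:tangent_cone_Ft} via the image of $Y^{d-2}$, and then applies the $F_{t+1}$ and $F_{t+2}$ transformations and simplifies. Your explicit remarks on the monomial-to-monomial (indeed injective) nature of each transformation and the resulting absence of cancellation are sound and only make explicit what the paper leaves implicit.
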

\begin{proof}
The image in $F_t$ of $X^kY^{d-i}$ is
\[
X^kY^{kt+d-i-n_1-\cdots-n_t}=X^kY^{kt-i+2^u},
\]
since
\[
\sum_{r=1}^tn_r=(d-2)-(2^u-2)=d-2^u,
\]
using Lemma~\ref{lem:tangent_cone_Ft}. Then the image in $F_{t+1}$ of $X^kY^{d-i}$ is
\[
X^{k(t+1)-i+2}Y^{kt-i+2^u}
\]
and the image in $F_{t+2}$ is
\[
X^{k(t+1)-i+2}Y^{kt-i+2^u+(2^{u-1}-2)(k(t+1)-i+2)-2^u+4}.\qedhere
\]
\end{proof}
\par
Lemma~\ref{lem:images_monomials} implies that the putative monomial $X^kY^{d-i}$ in $F$ is mapped to a monomial in $F_{t+2}$ of degree
\[
k(2^{u-1}(t+1)-1)-2^{u-1}i+2^u+2.
\]
In particular, since $u\ge 2$ by Lemma~\ref{lem:u_not_one}, this degree is congruent to $k$ modulo~$2$. Define
\begin{align*}
o(i)&=\begin{cases}
2^{u-1}(t+1)-1-2^{u-1}i+2^u+2 & \text{for odd $i$}\\
(2^{\nu(i)}+1)(2^{u-1}(t+1)-1)-2^{u-1}i+2^u+2 & \text{for even $i$}
\end{cases}
\intertext{and}
e(i)&=\begin{cases}
2^z(2^{u-1}(t+1)-1)-2^{u-1}i+2^u+2 & \text{for odd $i$}\\
2^{\nu(i)}(2^{u-1}(t+1)-1)-2^{u-1}i+2^u+2 & \text{for even $i$},
\end{cases}
\end{align*}
where $z$ is determined as follows. If $i=\sum_{n\ge 0}a_n2^n$ with $a_n\in\{0,1\}$ is the base-$2$ expansion of $i$, then $z$ is the smallest positive integer $n$ such that $a_n=0$. Note that $z\ge 1$ for odd $i$.
\par
Recall our assumption that $f$ contains no monomials whose degree is a power of two and that $f$ is not the zero polynomial. Lemmas~\ref{lem:Lucas} and~\ref{lem:images_monomials} imply that, among all monomials with coefficient $A_i$ in $F_{t+2}$, the smallest odd degree is $o(i)$ and, if $i+1$ is not a power of two, then the smallest even degree is $e(i)$ (if $i+1$ is a power of two, then there are no monomials in $F_{t+2}$ of even degree with coefficient $A_i$). Accordingly, define
\begin{equation}
m=\min\{o(i):A_i\ne 0\}.   \label{eqn:def_m}
\end{equation}
We shall first prove some properties of this number.
\begin{lemma}
\label{lem:unique_monomial}
We have $m=o(i)$ for some uniquely determined $i$.
\end{lemma}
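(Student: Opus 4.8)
The plan is to make the quantity $o(i)$ completely explicit and then to show that the function $i\mapsto o(i)$, restricted to the finite nonempty set $\{i:A_i\ne 0\}$, attains its minimum only once. Recall that $o(i)$ is the degree of the image in $F_{t+2}$ of the monomial of smallest \emph{odd} $X$-degree carrying the coefficient $A_i$ in $F$; by Lemma~\ref{lem:Lucas} that smallest odd exponent is $1$ when $i$ is odd and $2^{\nu(i)}+1$ when $i$ is even. Writing $A=2^{u-1}(t+1)-1$ (which is positive and, since $u\ge 2$, odd) and setting $c(i)=1$ for odd $i$ and $c(i)=2^{\nu(i)}+1$ for even $i$, the two displayed formulae for $o(i)$ collapse into the single expression $o(i)=c(i)\,A-2^{u-1}i+2^u+2$. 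The first thing to record is that, for $i$ ranging over a fixed value of $c(i)$, this is strictly decreasing in $i$.

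Next I would assume for contradiction that $o(i_1)=o(i_2)=m$ with $i_1\ne i_2$. Subtracting gives $[c(i_1)-c(i_2)]\,A=2^{u-1}(i_1-i_2)$, and since $A$ is odd this forces $2^{u-1}\mid c(i_1)-c(i_2)$. I would then organise the argument according to the values of $c(i_1),c(i_2)$. If $c(i_1)=c(i_2)$ (both $i$ odd, or both even with $\nu(i_1)=\nu(i_2)$), the strict monotonicity in $i$ forces $i_1=i_2$, a contradiction. If both are even with, say, $\nu(i_1)<\nu(i_2)$, then $c(i_1)-c(i_2)=2^{\nu(i_1)}-2^{\nu(i_2)}$ has $2$-adic valuation $\nu(i_1)$, so $\nu(i_1)\ge u-1$; solving the identity for $i_1-i_2$ and reading off valuations gives $\nu(i_1-i_2)=\nu(i_1)-(u-1)<\nu(i_1)$, whence $\nu(i_2)=\nu(i_1)-u+1$, contradicting $\nu(i_2)>\nu(i_1)$.

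The remaining, genuinely harder, case is $i_1$ odd and $i_2$ even. Here $c(i_1)-c(i_2)=-2^{\nu(i_2)}$, so $\nu(i_2)\ge u-1$; and because $i_1$ is odd while $i_2$ is even, the relation $i_2-i_1=2^{\nu(i_2)-u+1}A$ forces $\nu(i_2)=u-1$ (otherwise the right-hand side is even) and hence $i_2=i_1+A$. I expect this to be the main obstacle: one checks that the bare degree equation $o(i_1)=o(i_2)$ is actually \emph{satisfiable} in this configuration, so it must be excluded using the minimality of $m$ together with the precise meaning of $u$. The key auxiliary fact is that, since the transformations $F_{r-1}\mapsto F_r$ preserve $X$-degree, the tangent cone of $F_{t-1}$ consists exactly of the images of the monomials $X^{2^{\nu(i)}}Y^{d-i}$ that minimise $\Phi(i):=t\,2^{\nu(i)}-i$, so that $u$ is the least value of $\nu(i)$ among such minimisers. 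I would fix $i_*$ in the support with $\nu(i_*)=u$ and $\Phi(i_*)$ minimal. Then minimality $o(i_*)\ge o(i_2)$ simplifies to $i_*-i_2\le A$, while $\Phi(i_*)\le\Phi(i_2)$ gives $i_*-i_2\ge t\,2^{u-1}$; since $\nu(i_*)=u>u-1=\nu(i_2)$ we have $\nu(i_*-i_2)=u-1$, and the only multiple of $2^{u-1}$ in the interval $[\,t\,2^{u-1},\,t\,2^{u-1}+2^{u-1}-1\,]$ is $t\,2^{u-1}$ itself. Hence $i_*-i_2=t\,2^{u-1}$, so $\Phi(i_2)=\Phi(i_*)$ is also minimal while $\nu(i_2)=u-1<u$. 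As distinct support indices of a fixed valuation have distinct values of $\Phi$, no cancellation occurs, so $X^{2^{u-1}}Y^{\ell'}$ survives in the tangent cone of $F_{t-1}$ with nonzero coefficient $A_{i_2}$, contradicting the minimality of $u$ and completing the proof.
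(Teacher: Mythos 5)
Your proof is correct, and while its first half coincides with the paper's, it settles the decisive mixed-parity case by a genuinely different argument. Like the paper, you dispose of the cases $\nu(i_1)=\nu(i_2)$ and of two even indices with distinct valuations by comparing $2$-adic valuations in $[c(i_1)-c(i_2)]A=2^{u-1}(i_1-i_2)$ with $A=2^{u-1}(t+1)-1$ odd and $u\ge 2$; this is the same computation the paper phrases as $o(i')-o(i)\equiv 2^{\nu(i)}\pmod{2^{\nu(i)+1}}$. In the remaining case ($i_1$ odd, $i_2$ even) the two routes diverge. The paper stays entirely inside $F_{t+2}$: minimality of $m$ shows that $e(i_2)$ is the smallest even degree occurring in $F_{t+2}$, hence $e(i_2)=2$ because the tangent cone of $F_{t+2}$ is $\alpha X^2+Y^2$; then $o(i_2)=2^{u-1}(t+1)+1$ together with $o(i_1)=o(i_2)$ forces $i_1=2$, contradicting oddness. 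You instead extract from $o(i_1)=o(i_2)$ the constraints $\nu(i_2)=u-1$ and $i_2=i_1+A$, and then unwind the definition of $u$ back to $F_{t-1}$: since the maps $F_{r-1}\mapsto F_r$ preserve $X$-degree, the tangent cone of $F_{t-1}$ consists precisely of the (non-cancelling) images of the monomials $X^{2^{\nu(i)}}Y^{d-i}$ minimising $\Phi(i)=t2^{\nu(i)}-i$, so $u$ is the least valuation among minimisers --- a fact the paper never makes explicit but which I checked is valid, including the point that the pure power $Y^{d-2}$ is excluded because the tangent cone of $F_{t-1}$ is divisible by $X$. Squeezing $i_*-i_2$ between $t2^{u-1}$ (from $\Phi(i_*)\le\Phi(i_2)$) and $A=t2^{u-1}+2^{u-1}-1$ (from $o(i_*)\ge o(i_2)=m$), and using $\nu(i_*-i_2)=u-1$, gives $i_*-i_2=t2^{u-1}$ (if $t$ is even this already contradicts $\nu(i_*-i_2)=u-1$, which you gloss over, but the argument terminates either way), whence $\Phi(i_2)=\Phi(i_*)$ is minimal and $X^{2^{u-1}}Y^{\ell'}$ survives in the tangent cone of $F_{t-1}$, contradicting the minimality of $u$. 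All of these computations check out. As for what each approach buys: the paper's argument is shorter and self-contained at the level of $F_{t+2}$, reusing the already-proved tangent-cone lemma and the auxiliary function $e$; yours never needs $e(i)$ at all, but pays for this with the extra bookkeeping lemma characterising $u$ via $\Phi$, and in exchange it identifies exactly which exponent configuration a tie would require, which is somewhat more informative.
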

\begin{proof}
Suppose for a contradiction that $m=o(i)=o(i')$ for some integers $i\ne i'$. We first show that one of $i$ and $i'$ is odd and the other is even. If~$i$ and $i'$ are both odd or more generally $\nu(i)=\nu(i')$, then we force $i=i'$, a contradiction. If $i$ and $i'$ are both even and $\nu(i)<\nu(i')$, then we obtain using $u\ge 2$ by Lemma~\ref{lem:u_not_one}
\[
o(i')-o(i)\equiv 2^{\nu(i)}\pmod{2^{\nu(i)+1}},
\]
contradicting $o(i)=o(i')$. This proves our claim and so we can assume without loss of generality that $i$ is even and $i'$ is odd.
\par
Next we show that $e(i)=2$. If there is an even $j$ such that $e(j)<e(i)$ and $A_j\ne 0$, then it follows immediately from the definitions that $o(j)<o(i)$, which contradicts $m=o(i)$. If there is an odd~$j$ such that $e(j)<e(i)$ and $A_j\ne 0$, then
\[
o(j)<e(j)<e(i)<o(i),
\]
which again contradicts $m=o(i)$. Hence $e(i)$ is the smallest even degree of a monomial in $F_{t+2}$. Since $F_{t+2}$ contains the monomial $X^2$ and no monomials of smaller degree, we find that $e(i)=2$.
\par
Since $i$ is even and $e(i)=2$, we obtain $o(i)=2^{u-1}(t+1)+1$. The equality $o(i)=o(i')$ then gives $2^{u-1}i'=2^u$, so $i'=2$, contradicting that $i'$ is odd.
\end{proof}
\par
\begin{lemma}
\label{lem:even_degrees}
Every monomial in $F_{t+2}$ of degree strictly less than $m$ has even degree in $X$ and even degree in $Y$.
\end{lemma}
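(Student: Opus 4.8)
The plan is to track parities through the explicit image formulas of Lemma~\ref{lem:images_monomials}. Each of the substitutions and transformations used to pass from $F$ to $F_{t+2}$ acts injectively on monomials and preserves their coefficients, so every monomial occurring in $F_{t+2}$ is the image of a unique monomial $X^kY^{d-i}$ of $F$; it therefore suffices to control the parities of the exponents $r$ and $s$ given by Lemma~\ref{lem:images_monomials} in terms of $k$ and $i$. Recall from the discussion following that lemma that the total degree $r+s$ of the image is congruent to $k$ modulo two (this uses $u\ge 2$ from Lemma~\ref{lem:u_not_one}).

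First I would use the definition~\eqref{eqn:def_m} of $m$. Since $o(i)$ is the smallest odd degree of a monomial in $F_{t+2}$ with coefficient $A_i$, the number $m=\min\{o(i):A_i\ne 0\}$ is the smallest odd degree occurring among \emph{all} monomials of $F_{t+2}$. Consequently any monomial of degree strictly less than $m$ has even total degree, and by the congruence above it is the image of some $X^kY^{d-i}$ with $k$ even.

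The crux is to show that $i$ is even as well. Suppose instead that $i$ is odd while the image has even degree. Then $X^kY^{d-i}$ is a monomial with coefficient $A_i$ and even exponent $k$, which forces $i+1$ not to be a power of two, so that $e(i)$ is defined; moreover $A_i\ne 0$, so $o(i)\ge m$. For odd $i$ the smallest even power of $X$ carrying $A_i$ is $X^{2^z}$ with $z\ge 1$, hence our monomial has degree at least $e(i)$. Comparing the defining formulas for odd $i$, the difference $e(i)-o(i)$ equals $(2^z-1)\bigl(2^{u-1}(t+1)-1\bigr)$, which is positive because $2^z\ge 2>1$ and, since $u\ge 2$ and $t\ge 1$, we have $2^{u-1}(t+1)-1>0$. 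Thus the degree is at least $e(i)>o(i)\ge m$, contradicting that it is less than $m$. Therefore $i$ is even.

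Finally, with $k$ and $i$ both even, the formula $r=k(t+1)-i+2$ of Lemma~\ref{lem:images_monomials} shows that $r$ is even, and since the total degree $r+s$ is even, $s$ is even too; this is exactly the assertion. I expect the only delicate point to be the middle step: one must verify that $e(i)$ is genuinely defined (equivalently, that $i+1$ is not a power of two) before invoking the inequality $e(i)>o(i)$, and check that the index $i$ under consideration does satisfy $A_i\ne 0$ so that $o(i)\ge m$ legitimately applies.
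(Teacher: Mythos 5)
Your proof is correct and follows essentially the same route as the paper: both arguments rest on the minimality of $m$, the inequality $e(i)>o(i)$ for odd $i$ (your explicit computation $e(i)-o(i)=(2^z-1)(2^{u-1}(t+1)-1)$ is exactly what underlies the paper's claim $o(i)<e(i)$), the congruence of the total degree to $k$ modulo $2$, and the parity of the exponent formulas in Lemma~\ref{lem:images_monomials}. The only difference is cosmetic ordering—you deduce $k$ even first and then rule out odd $i$, while the paper rules out odd $i$ first—so there is nothing to add.
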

\begin{proof}
Let $i$ be an integer such that $A_i\ne 0$. If $i$ is odd, then $o(i)<e(i)$, and so $o(i)$ is the smallest degree of a monomial in $F_{t+2}$ with coefficient $A_i$. Hence, if there is a monomial in $F_{t+2}$ of even degree less than $o(i)$ with the same coefficient~$A_i$, then $i$ must be even. By Lemma~\ref{lem:images_monomials}, such a monomial has degree
\[
k(2^{u-1}(t+1)-1)-2^{u-1}i+2^u+2.
\]
Since $u\ge 2$ by Lemma~\ref{lem:u_not_one}, we force $k$ to be even. It then follows from Lemma~\ref{lem:images_monomials} that this monomial has even degree in $X$ and even degree in~$Y$. 
\end{proof}
\par
We now complete the proof of Theorem~\ref{thm:planar}.
\begin{proof}[Proof of Theorem~\ref{thm:planar}]
Recall the definition of $m$ from~\eqref{eqn:def_m}. Put $H_0=F_{t+2}$ and define $H_1,H_2,\dots,H_{(m-1)/2}$ recursively by
\[
H_{i+1}(X,Y)=\frac{H_i(X,c_iX+XY)}{X^2},
\]
where $c_i$ is such that $c_i^2$ is the coefficient of $X^2$ in $H_i$. Note that $H_{i+1}$ is obtained from $H_i$ by a variable substitution $(X,Y)\mapsto (X,c_iX+Y)$ followed by the transformation $g(X,Y)\mapsto g(X,XY)/X^2$. We shall see that $H_1,H_2,\dots,H_{(m-1)/2}$ are indeed polynomials and that the tangent cone of $H_{(m-1)/2}$ equals $\alpha X$ for some nonzero $\alpha\in\FF_q$, which then leads to a contradiction by Proposition~\ref{pro:main_tool}.
\par
By Lemma~\ref{lem:unique_monomial}, the polynomial $H_0=F_{t+2}$ contains a unique monomial of degree $m$, and so $H_0(X,c_0X+Y)$ contains $\alpha X^m$ for some nonzero $\alpha\in\FF_q$. Lemma~\ref{lem:even_degrees} asserts that every monomial in~$H_0$ of degree strictly less than~$m$ has even degree in $X$ and even degree in $Y$. Since $\binom{n}{k}$ is even for even~$n$ and odd $k$ by Lemma~\ref{lem:Lucas}, the images of such monomials in $H_1,H_2,\dots,H_{(m-1)/2-1}$ also have even degree in $X$ and even degree in $Y$. Hence the tangent cones of $H_1,H_2,\dots,H_{(m-1)/2-1}$ have degree two and never contain $XY$. This also implies that $H_1,H_2,\dots,H_{(m-1)/2}$ are indeed polynomials and that the tangent cone of $H_{(m-1)/2}$ equals $\alpha X$.
\end{proof}


\section{Proof of Proposition~\ref{pro:APN}}
\label{sec:APN}

We now give a proof of Proposition~\ref{pro:APN}. As before, let $q$ be a power of two and let $f\in\FF_q[X]$ be a polynomial in which the degree of every monomial is not a power of two. Again there is no loss of generality since the addition of a 2-polynomial preserves the APN property of the function induced by $f$. Define the polynomial
\[
\psi(X,Y,Z)=\frac{f(X)+f(Y)+f(Z)+f(X+Y+Z)}{(X+Y)(X+Z)(Y+Z)}.
\]
It is well known (see~\cite[Proposition 3.1]{Rod2009}, for example) that $f$ induces an APN function on $\FF_q$ if and only if all $\FF_q$-rational points on the affine surface defined by $\psi(X,Y,Z)=0$ satisfy $(X+Y)(X+Z)(Y+Z)=0$. Write
\[
f=\sum_{i=0}^dA_iX^i,
\]
where $A_d\ne 0$. Then the homogenised form of $\psi$ is
\[
\widetilde{\psi}(X,Y,Z,T)=\sum_{i=3}^dA_i\frac{X^i+Y^i+Z^i+(X+Y+Z)^i}{(X+Y)(X+Z)(Y+Z)}T^{d-i}.
\]
As for planar functions, we consider the affine curve defined by $F(X,Y)=0$, where $F(X,Y)=\widetilde{\psi}(X,1,X+1,Y)$. We have
\[
F(X,Y)=\sum_{i=3}^dA_i\frac{X^i+1+(X+1)^i}{(X+1)X}Y^{d-i}
\]
and, after expanding,
\[
F(X,Y)=\sum_{i=3}^dA_iY^{d-i}\sum_{k=1}^{i-1}\Bigg[\binom{i-1}{k}+1\Bigg]X^{k-1}. \]
If $f$ induces an APN function on $\FF_q$, then all $\FF_q$-rational points of the affine curve defined by $F(X,Y)=0$ satisfy $XY(X+1)=0$.
\par
Now assume that $d\le q^{1/4}$ and $d\equiv 2\pmod 4$. Then $F(0,0)=0$ and Lemma~\ref{lem:Lucas} implies that the tangent cone of $F$ equals $A_dX+A_{d-1}Y$. Since $A_d\ne 0$, we find from Lemma~\ref{lem:tangent} that $F$ has an absolutely irreducible factor over $\FF_q$. An argument that is almost identical to that used in the proof of Proposition~\ref{pro:Hasse_Weil} then shows that the curve defined by $F(X,Y)=0$ has $\FF_q$-rational points not on one of the lines $X=0$, $Y=0$, or $X=1$. Hence $f$ cannot be APN on $\FF_q$.

\section*{Acknowledgements}

\sloppypar

Daniele Bartoli was partially supported by the Italian Ministero dell'Istruzione, dell'Universit\`a e della Ricerca (MIUR) and the Gruppo Nazionale per le Strutture Algebriche, Geometriche e le loro Applicazioni (GNSAGA-INdAM). This work was carried out when the first author was visiting Paderborn University under the programme ``Research Stays for University Academics and Scientists" funded by the German Academic Exchange Service (DAAD).


\providecommand{\bysame}{\leavevmode\hbox to3em{\hrulefill}\thinspace}
\providecommand{\MR}{\relax\ifhmode\unskip\space\fi MR }
\providecommand{\MRhref}[2]{%
  \href{http://www.ams.org/mathscinet-getitem?mr=#1}{#2}
}
\providecommand{\href}[2]{#2}

\end{document}